\documentclass[a4paper,11pt]{amsart}

\usepackage{multicol}
\usepackage{amsmath,latexsym,amsbsy,amssymb}
\usepackage{enumerate}
\usepackage{amsthm}
\usepackage[latin1]{inputenc}

\newtheorem{Theorem}{Theorem}[section]
\newtheorem{Definition}[Theorem]{Definition} 
\newtheorem{Proposition}[Theorem]{Proposition}
\newtheorem{Corollary}[Theorem]{Corollary}
\newtheorem{Rem}[Theorem]{Remark}
\newtheorem{Lemma}[Theorem]{Lemma}

\numberwithin{equation}{section}

\newcommand{\norm}[1]{\left\|#1\right\|}
\newcommand{\pd}[1]{\langle #1 \rangle}

\newcommand{\R}{{\mathbb R}}

\linespread{1.2}
\setlength{\topmargin}{-0.2in}
\setlength{\oddsidemargin}{0.1in}
\setlength{\evensidemargin}{0.1in}
\setlength{\textwidth}{6.5in}
\setlength{\rightmargin}{0.0in}
\setlength{\leftmargin}{0in}
\setlength{\textheight}{9.5in}


\begin{document}

\title{Non-Lipschitz points and the $SBV$ regularity of the minimum time function}

\author[Giovanni Colombo]{Giovanni Colombo}
\address[Giovanni Colombo]{Universit\`a di Padova, Dipartimento di Matematica, via Trieste 63, 35121 Padova, Italy}
\email{colombo@math.unipd.it}

\author[Khai T. Nguyen]{Khai T. Nguyen}
\address[Khai T. Nguyen]{Universit\`a di Padova, Dipartimento di Matematica, via Trieste 63, 35121 Padova, Italy}
\email{khai@math.unipd.it}

\author[Luong V. Nguyen]{Luong V. Nguyen}
\address[Luong V. Nguyen]{Universit\`a di Padova, Dipartimento di Matematica, via Trieste 63, 35121 Padova, Italy}
\email{vanluong@math.unipd.it}
\thanks{This work was partially supported by M.I.U.R., project
``Viscosity, metric, and control theoretic methods for nonlinear
partial differential equations'', by CARIPARO Project ``Nonlinear Partial Differential Equations:
models, analysis, and control-theoretic problems'',  and University of Padova research project ``Some analytic and
differential geometric aspects in Nonlinear Control Theory, with applications to Mechanics''. The authors are also supported by
the European Union under the 7th Framework Programme ``FP7-PEOPLE-2010-IT'', Grant agreement number 264735-SADCO.
In particular, the third author is a SADCO PhD fellow, position ESR4. K.T.N is also supported by the ERC Starting
Grant 2009 n.240385 ConLaws.}

\keywords{Reachable sets, normal vectors, Maximum Principle, minimized Hamiltonian.}

\subjclass[2000]{49N60, 49N05, 49J52}

\date{\today}
\begin{abstract}
This paper is devoted to the study of the Hausdorff dimension of the singular set of the minimum time function $T$
under controllability conditions which do not imply the Lipschitz continuity of $T$.
We consider first the case of normal linear control systems with constant
coefficients in $\mathbb{R}^N$.
We characterize points around which $T$ is not Lipschitz as those which can be reached from the origin by an
optimal trajectory (of the reversed  dynamics) with vanishing minimized Hamiltonian. Linearity permits an explicit representation
of such set, that we call $\mathcal{S}$. Furthermore, we show that $\mathcal{S}$ is $\mathcal{H}^{N-1}$-rectifiable with positive
$\mathcal{H}^{N-1}$-measure. Second, we 
consider a class of control-affine \textit{planar} nonlinear systems satisfying a second order
controllability condition: we characterize the set $\mathcal{S}$
in a neighborhood of the origin in a similar way and prove the $\mathcal{H}^1$-rectifiability of $\mathcal{S}$ and that
$\mathcal{H}^1(\mathcal{S})>0$. In both cases, $T$ is known to 
have epigraph with positive reach, hence to be a locally $BV$ function (see \cite{CMW,GK}).
Since the Cantor part of $DT$ must be concentrated in $\mathcal{S}$, our analysis yields that $T$ is $SBV$, i.e.,
the Cantor part of $DT$ vanishes. Our results imply also that $T$ is locally of class $\mathcal{C}^{1,1}$ outside a
$\mathcal{H}^{N-1}$-rectifiable set. With small changes, our results are valid also in the case of multiple control input.
\end{abstract}
\maketitle
\section{Introduction}\label{intro}
Consider the control system
\begin{equation}\label{introdyn}
\begin{cases}
\dot{x} &=\;\, F(x) + G(x)u,\qquad |u|\le 1,\; x\in\mathbb{R}^N,\\
x(0)&=\;\,\xi.
\end{cases}
\end{equation}
The minimum time function $T(\xi)$ to reach the origin from $\xi$ under the above dynamics is well known to be,
in general, both non-smooth and non-Lipschitz. Several papers were devoted to the partial regularity of $T$. In particular,
we quote results devoted to establishing (semi)convexity/concavity properties of $T$ under various assumptions (see
\cite{CS0,CS,CMW,GK,K,CaKh,camapw}), together with \cite{BP}, which is concerned with planar systems.

In this paper we concentrate mainly on the lack of Lipschitz continuity of $T$, which is essentially due to the lack of first
order controllability. More precisely, even if at some $x$ the right hand side of \eqref{introdyn} does not point towards the origin, i.e., the
scalar product between any vector in $F(x)+G(x)\mathcal{U}$ and $x$ is merely nonnegative, it is still possible that a trajectory
through $x$ reach the origin, provided the Lie bracket $[F(x),G(x)]$ has nonvanishing scalar product with the missing direction
$x$ (higher order controllability).
The price to pay is a slower approaching to the origin: one needs to switch between $G$ and $-G$, like a sailor which has to beat to windward.
The simple example $\ddot{x}=u\in [-1,1]$ exhibits this behavior: at every point of the $x_1$-axis (we set $\dot{x}_1=x_2$,
$\dot{x}_2=u$) the right hand side of \eqref{introdyn} is vertical and $T$ is not locally Lipschitz in the whole of $\mathbb{R}^n$
(but at the points of the $x_1$-axis \textit{is} Lipschitz). By introducing the minimized Hamiltonian
\[
h(x,\zeta):=\langle F(x),\zeta\rangle + \min_{u\in\mathcal{U}}\langle G(x)u,\zeta\rangle,
\]
the condition of non-pointing towards the origin becomes
\[
h(x,\zeta)\ge 0,
\]
where $\zeta$ is a normal to the sublevel of $T$ corresponding to $T(x)$. Since
the minimized Hamiltonian is constant and nonpositive along every optimal trajectory,
it is natural to expect that non-Lipschitz points of $T$ lie exactly where such Hamiltonian vanishes.
In fact, in Section \ref{sec:minham} we prove this characterization.

Let $\mathcal{S}$ be the set of non-Lipschitz points of $T$. In Sections \ref{sec:lin} and \ref{sec:nonlin} we characterize $\mathcal{S}$ using
points which belong to an optimal pair (i.e., an optimal trajectory together with a corresponding adjoint arc)
of \eqref{introdyn} with vanishing Hamiltonian, and, for the
linear case, we give an explicit representation of $\mathcal{S}$. As a consequence, we show that at each $\bar{x}\in\mathcal{S}$
the sublevel $\mathcal{R}_{T(\bar{x})}$ is tangent to $\mathcal{S}$, in the sense that there exists a normal vector to $\mathcal{R}_{T(\bar{x})}$
at $\bar{x}$ which is tangent to the optimal trajectory reaching $\bar{x}$ from the origin.
The result is valid for both normal linear systems with constant
coefficients in any space dimension (see Theorem \ref{charnonlip}) and for smooth nonlinear two dimensional systems
such that the origin is an equilibrium point, the linearization at $0$ is normal and furthermore $DG(0)=0$ (see Theorem \ref{rect2dim}).
The condition $DG(0)=0$ ensures that the nonlinearity is sufficiently mild to preserve a linear like behavior in a neighborhood of the
origin whose size can be estimated. In both cases it is known that the epigraph of $T$ has locally positive reach (see \cite{CMW,GK}).
Reasons for the restriction to two space dimensions
in the nonlinear case are discussed in the paper \cite{GK}, to which the present work owes some results.

Our first result is the $\mathcal{H}^{N-1}$-rectifiability of $\mathcal{S}$ for the linear single
input case, see Theorem \ref{N-1rect}, and, respectively, the $\mathcal{H}^1$-rectifiability for the nonlinear two dimensional case, see Theorem \ref{rect2dim}.
For the linear case, the switching function
\[
g_\zeta (t) = \langle \zeta , e^{At}b\rangle,
\]
where $b$ is a column of the matrix $B$, plays an important role. Actually we partition $\mathcal{S}$ according to the
multiplicity of zeros of $g_\zeta$ and embed each part into a locally Lipschitz graph. The nonlinear case 
is handled by showing that $\mathcal{S}$ consists of optimal trajectories with vanishing Hamiltonian. Since, due to the space dimension
restriction, such trajectories are at most two for the single input case, the $\mathcal{H}^1$-rectifiability is clear. We observe that
the investigation of the regularity of the \textit{Minimum time front}, i.e., the boundary of $\mathcal{R}_t$, performed in \cite[Chapter 3]{BP}
cannot provide information on non-Lipschitz points of $T$, since an analysis of sublevels is not enough to describe the epigraph
of a function.

These rectifiability results shade some light on the propagation of singularities for minimum time functions.
In fact, the positive reach property of epi$(T)$ implies that $T$ is locally semiconvex outside the closed set
$\mathcal{S}$ (see \cite[Theorem 5.1]{CM}).
The structure of singularities of semiconvex functions is well understood (see \cite[Chapter 4]{CS}): in particular, a locally
semiconvex function is of class $\mathcal{C}^{1,1}$ outside a $\mathcal{H}^{N-1}$-rectifiable set.
Therefore our rectifiability results for $\mathcal{S}$
imply that $T$ is of class $\mathcal{C}^{1,1}$ outside a closed $\mathcal{H}^{N-1}$-rectifiable set. We observe that, for general functions
whose epigraph satisfies a uniform external sphere condition, the Hausdorff dimension of the set of non-Lipschitz points was proved to
be less or equal to $n-1/2$, with an example showing the sharpness of the estimate (see \cite[Theorem 1.3 and Proposition 7.3]{AKD}).
The present paper therefore improves that result, for the particular case of a minimum time function. We prove also (see section \ref{sec:propag})
a converse (propagation) result: for $\mathcal{H}^{N-1}$-a.e.~$x\in \mathcal{S}$ such that $T(x)$ is small enough
there exists a neighborhood $V$ such that $\mathcal{H}^{N-1}(\mathcal{S}\cap V)>0$. In particular, for single input normal linear systems
we show that, for all $t>0$ small enough, the set $\mathcal{S}\cap\mathcal{R}_t$, up to a $\mathcal{H}^{N-2}$-negligible subset, is a $\mathcal{C}^{1}$-surface
of dimension $N-2$. 
Apparently, this is the first result in the literature concerning propagation of non-Lispchitz singularities.

Applications of the above results to time optimal feedbacks will be discussed elsewhere.

The positive reach property of epi$(T)$ implies also that $T$ has locally bounded variation (see \cite[Proposition 7.1]{CM}).
Therefore, as a consequence of the above analysis we obtain that $T$ belongs to the smaller class of locally $SBV$ functions,
namely the Cantor part $D_cT$ of its distributional derivative, which is a Radon measure by definition of $BV$, vanishes. In fact,
on one hand $D_cT$ must be concentrated on the set $\mathcal{S}$ of non-Lipschitz points of $T$, on the other the rectifiability
properties that we proved for $\mathcal{S}$ yield exactly the $SBV$ regularity of $T$.
To our best knowledge this property of $T$ is observed here for the first time.
\section{Preliminaries}\label{sec:prelim}
\subsection{Nonsmooth analysis, sets with positive reach, and geometric measure theory}\quad\\
The space dimension is denoted by $N$ and we suppose $N \ge 2$. The unit sphere in $\mathbb{R}^N$ is denoted by $\mathbb{S}^{N-1}$.\\
Let $K\subset\R^N$ be closed with boundary $\mathrm{bdry} K$.
We need some concepts of nonsmooth analysis (see, e.g. \cite[Chapters 1 and 2]{CLSW}). Given $x \in K$ and $v \in \R^N$,
we say that $v$ is a \emph{proximal normal} to $K$ at $x$, and denote this fact by $v\in N_K(x)$,
provided there exists $\sigma=\sigma(v,x)\ge 0$ such that
\[
\pd{  v,y-x}\le\sigma \norm{y-x}^2,\quad \textrm{ for all }y\in K.
\]
If $K$ is convex, then $N_K(x)$ coincides with the normal cone of Convex Analysis.
The set of \emph{limiting normals} to $K$ at $x$ is denoted by $N^L_{K}(x)$, and consists of those $v\in\mathbb R^N$ for which
there exist sequences $\{x_i\},\,\{v_i\}$ with $x_i\to x$, $v_i\to v$, and $v_i\in N_{K}(x_i)$.
The \emph{Clarke normal} cone $N^C_K(x)$ equals $\overline{\mathrm{co}}N^{L}_{K}(x)$, where \lq\lq$\overline{\mathrm{co}}$\rq\rq\
means the closed convex hull. \\
Let $\Omega \subset \R^N$ be open and let $f: \Omega \rightarrow \R \cup\{+\infty\}$ be lower semicontinuous. 
The epigraph of $f$ is $\mathrm{epi}(f) = \{(x,y) \in \Omega \times \R: y\ge f(x)\}$. The \textit{proximal subdifferential} 
$\partial f(x)$ of $f$ at a point $x$ of dom$(f)=\{ x:f(x)<+\infty\}$ is the set of vectors $v \in \R^N$ such that 
\[
(v,-1)\in N_{\text{epi}(f)}(x,f(x)).
\]
The \textit{horizon subdifferential} $\partial ^{\infty} f(x)$ of $f$ at a point $x\in\mathrm{dom}(f)$ is the set of vectors $v\in \R^N$ such that
\[
(v,0)\in N_{\text{epi}(f)}(x,f(x)).
\]
This concept is connected with the lack of Lipschitz continuity of $f$ around $x$ (see, e.g., \cite[Chapter 9]{RW}) and will be used mainly in Section 3.\\
Sets with positive reach will play an important role in the sequel. The definition was first given by Federer 
in \cite{Fe} and later studied by several authors (see the survey paper \cite{CT}).
\begin{Definition} \label{posreach} 
Let $K\subset \R^N$ be locally closed. We say that $K$ has locally positive reach provided there
exists a continuous function $\varphi:K\to[0,+\infty)$ such that the inequality
\begin{equation}\label{ineqphi}
\langle v,y-x\rangle\le\varphi(x)\| v\|\,\| y-x\|^2
\end{equation}
holds for all $x,y\in K$ and $v\in N_{K}(x)$.
\end{Definition}
In particular, every convex set has positive reach: it suffices to take $\varphi \equiv 0$ in (\ref{ineqphi}).\\
Continuous functions whose epigraph has locally positive reach will be crucial in our analysis. 
Such functions enjoy several regularity properties, mainly studied in \cite{CM}. We list two of them which will be used in the sequel.
\begin{Theorem} \label{fposreach} Let $\Omega \subset \R^N$ be open and let $f: \Omega \rightarrow \R$ be 
continuous and such that epi$(f)$ has locally positive reach. Then
\begin{enumerate}
\item[(i)] $f$ is a.e. differentiable in $\Omega$,
\item[(ii)] $f$ has locally bounded variation in $\Omega$.
\end{enumerate}
\end{Theorem}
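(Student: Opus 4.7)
The plan is to reduce both parts to structural theorems for sets of positive reach, following the approach of \cite{CM}. The geometric translation of the positive reach hypothesis is the key: a vector $(v,-\lambda)\in N_{\mathrm{epi}(f)}(x,f(x))$ with $\lambda>0$ corresponds, via $v/\lambda\in\partial f(x)$, to a proximal subgradient with quadratic underestimate
\[
f(y)\;\ge\;f(x)+\Big\langle \tfrac{v}{\lambda},\,y-x\Big\rangle - \varphi(x,f(x))\,\tfrac{\sqrt{\|v\|^2+\lambda^2}}{\lambda}\,\|y-x\|^2,
\]
while vectors $(v,0)\in N_{\mathrm{epi}(f)}(x,f(x))$ are precisely the nonzero elements of $\partial^\infty f(x)$ and detect the non-Lipschitz behavior of $f$.

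For part (i), let $\Sigma\subset\Omega$ denote the set where $\partial^\infty f$ contains a nonzero vector, equivalently the projection of those points of the graph of $f$ that admit a horizontal proximal normal. On $\Omega\setminus\Sigma$ a compactness argument on the normal cone together with the quadratic estimate above shows that $\partial f$ is locally bounded, so $f$ is locally Lipschitz there, and Rademacher's theorem yields a.e.\ differentiability. It then remains to prove $\mathcal{L}^N(\Sigma)=0$. This is where Federer's structure theory for sets with positive reach enters: the unit normal bundle of $\mathrm{epi}(f)$ is countably $(N-1)$-rectifiable with locally finite $\mathcal{H}^{N-1}$ measure, and points in the graph of $f$ possessing a horizontal normal form a subset whose projection onto $\Omega$ is $\mathcal{H}^{N-1}$-$\sigma$-finite, hence $\mathcal{L}^N$-negligible.

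For part (ii), the most direct route is to invoke Federer's theorem that every set of locally positive reach has locally finite perimeter. Applied to $\mathrm{epi}(f)\subset\Omega\times\R$, this gives $\mathrm{epi}(f)\in BV_{\mathrm{loc}}$, and a standard identification of the perimeter of the epigraph with $\|Df\|$ (via the Fleming--Rishel coarea formula and the one-to-one correspondence between sublevel sets of $f$ and horizontal slices of $\mathrm{epi}(f)$) produces $f\in BV_{\mathrm{loc}}(\Omega)$. Alternatively, combining (i) with the local boundedness of $\nabla f$ on $\Omega\setminus\Sigma$ and the fact that the singular contribution is supported on $\Sigma$, whose Minkowski/Hausdorff content is controlled by the positive reach measure, gives the same conclusion.

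The principal obstacle, and the deepest use of \cite{Fe,CM}, is the measure-theoretic smallness of $\Sigma$: once this is in hand, the local Lipschitz regularity on $\Omega\setminus\Sigma$ follows routinely from the quadratic subgradient bound, and the $BV$ conclusion is an essentially geometric consequence of the perimeter structure of $\mathrm{epi}(f)$.
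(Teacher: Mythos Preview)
The paper does not prove this theorem; it is quoted in the preliminaries as a known result from \cite{CM} (part~(ii) is \cite[Proposition~7.1]{CM}, as recalled elsewhere in the paper). There is therefore no in-paper argument to compare against, only the cited source.

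Your outline is broadly in the spirit of \cite{CM}, and the decomposition for (i)---local Lipschitz continuity off the horizontal-normal set $\Sigma$, then $\mathcal{L}^N(\Sigma)=0$---is the right one. However, your justification of $\mathcal{L}^N(\Sigma)=0$ contains a dimensional slip: since $\mathrm{epi}(f)\subset\R^{N+1}$, Federer's unit normal bundle is countably $N$-rectifiable with locally finite $\mathcal{H}^{N}$ measure, not $(N-1)$. Its rectifiability alone therefore does not force the set of graph points carrying a horizontal normal to project onto an $\mathcal{H}^{N-1}$-$\sigma$-finite subset of $\Omega$. A further step is needed---for instance, showing that at any such point the normal cone to $\mathrm{epi}(f)$ has dimension at least~$2$ (it contains the horizontal vector together with a direction having strictly negative last coordinate, which the epigraph structure supplies) and then invoking the stratification of the boundary of a positive-reach set by the dimension of the normal cone; alternatively, a coarea-type slicing of the $N$-rectifiable normal bundle by the last coordinate of the unit normal. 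This is the substantive point in \cite{CM}, and your sketch passes over it.

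Your route for (ii), via Federer's result that sets of positive reach have locally finite perimeter together with the classical equivalence between $f\in BV_{\mathrm{loc}}(\Omega)$ and locally finite perimeter of the subgraph in $\Omega\times\R$, is correct.
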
 
A brief survey of basic notions on functions with bounded variation will be presented in Section 6.
Our main reference, also for other basic concepts in geometric measure theory, is \cite{AFP}.
\subsection{Control theory}
Consider the following autonomous control system
\begin{equation}\label{generalsys}
\left\{\begin{array}{ll}
\dot{y}(t)\: = \: f(y(t),u(t)) & \text{a.e.,}\\
u(t) \: \in \:  \mathcal{U} & \text{a.e.,}\\
y(0)  \: = \:  x,
\end{array}\right.
\end{equation}
where the control set $\mathcal{U} \subset \R^M$ is nonempty and compact and $f: \R^N \times \mathcal{U} \rightarrow \R^N$ is continuous and 
Lipschitz with respect to the state variable $x$, uniformly with respect to $u$.
We denote by $\mathcal{U}_{ad}$ the set of admissible controls, i.e., all measurable functions $u$ such that $u(s) \in \mathcal{U}$ for
a.e.~$s$. Under our assumptions, for any $u(\cdot) \in \mathcal{U}_{ad}$, there is a unique Carath\'eodory 
solution of (\ref{generalsys}) denoted by $y^{x,u}(\cdot)$. The solution $y^{x,u}(\cdot)$ is called the trajectory starting 
from $x$ corresponding to the control $u(\cdot)$.

We will focus mainly on control systems which are linear or nonlinear with respect to the space variable and linear and symmetric
with respect to the control. More precisely, we will consider the linear control system 
\begin{equation}\label{lin}
\left\{\begin{array}{ll}
\dot{y}(t)\: = \: Ay(t)+Bu(t) & \text{a.e.,}\\
u(t) \: \in \:  \mathcal{U} =[-1,1]^{M}& \text{a.e.,}\\
y(0)  \: = \:  x,
\end{array}\right.
\end{equation}
where $1\le M\le N$ and $A\in\mathbb{M}_{N\times N}$, $B\in\mathbb{M}_{N\times M}$ and $\mathcal{U}=[-1,1]^{M}\ni (u_1,\ldots,u_M)=:u$,
together with the nonlinear two dimensional control system
\begin{equation}\label{nonlin}
\left\{\begin{array}{ll}
\dot{y}(t)\: = \: F(y(t))+G(y(t))u(t) & \text{a.e.,}\\
u(t) \: \in \:  [-1,1]^{M} & \text{a.e.,}\\
y(0)  \: = \:  x,
\end{array}\right.
\end{equation}
where $F$ and $G$ are suitable vector fields (the actual assumptions will be stated later) and $1\le M\le 2$.
We will use also the notation $B=(b_1,\ldots ,b_M)$ or $G=(g_1,\ldots,g_M)$, where each entry is an $N$-dimensional column.
Note that $\bar{x}$ is reachable by a solution of (\ref{lin}) at time $t$ if and only if the following (equivalent) conditions hold:
\begin{equation}\label{equivlin}
\bar{x}=e^{At}x+ \int_0^te^{A(t-s)}Bu(s)\,ds\qquad\text{and}\qquad x = e^{-At}\bar{x} - \int_0^te^{-As}Bu(s)\,ds,
\end{equation}
where $u(\cdot)\in\mathcal{U}_{ad}$.\\
For a fixed $x\in\mathbb{R}^N$, we define
$$\theta(x,u):=\min\ \lbrace{t\geq 0\: |\: y^{x,u}(t)=0\rbrace}.$$
Of course, $\theta(x,u)\in [0,+\infty]$, and $\theta(x,u)$ is the time taken for the trajectory
$y^{x,u}(\cdot)$ to reach $0$, provided $\theta(x,u)<+\infty$.
The \textit{minimum time} $T(x)$ to reach $0$ from $x$ is defined by
$
T(x):=\inf\ \lbrace{\theta(x,u)\: |\: u(\cdot)\in\mathcal{U}_{\mathrm{ad}}\rbrace}
$
and under standard assumptions the infimum is attained.
A minimizing control, say $\bar{u}(\cdot)$, is called an \textit{optimal control}. 
The trajectory $y^{x,\bar{u}}(\cdot)$ corresponding to $\bar{u}(\cdot)$ is called an \textit{optimal trajectory}.

Denote by $\mathcal{R}_t$ the set of points which can be steered to the origin with the control dynamics (\ref{generalsys}) 
within the time $t$. Then $\mathcal{R}_t$ is the set of points which can be reached from the origin with the \textit{reversed dynamics}
\begin{equation}\label{generalrev}
\left\{\begin{array}{ll}
\dot{x}(t)\: = \:- f(x(t),u(t)) & \text{a.e.,}\\
u(t) \: \in \:  \mathcal{U} & \text{a.e.,}\\
x(0)  \: = \:  0,
\end{array}\right.
\end{equation}
i.e., $\mathcal{R}_t$ is the sublevel $\{x\in \R^N: T(x) \le t\}$ of $T(\cdot)$.
If $\bar{u}$ is an admissible control steering $x$ to the origin in the minimum time $T(x)$, then the Dynamic Programming 
Principle 
implies that  
for all $0<t<T(x)$ the point $y^{x,\bar{u}}(t)$ belongs to the boundary of $\mathcal{R}_{t}$.

We state now Pontryagin's Maximum Principle,
giving first its linear version for the special case we are interested in.
\begin{Theorem}\label{th:PMPlin}
Consider the problem \eqref{lin} under the following (normality) condition:\\
for every column $b_i$ of $B$, $i = 1,...,M$, we have
$$\mathrm{rank}[b_i,Ab_i,...,A^{N-1}b_i] = N. $$
Let $T>0$ and let $x \in \R^N$. The following statements are equivalent:
\begin{itemize}
\item[(i)] $x \in \mathrm{bdry} \mathcal{R}_T$,
\item[(ii)] there exists an optimal control $\bar{u}$ steering $x$ to the origin in time $T$; in particular, $T(x) = T$;
\item[(iii)]\text{(Pontryagin Maximum Principle)} for every $\zeta \in N_{\mathcal{R}_T}(x), \zeta \ne 0$, we have
\begin{equation} \label{max}
\bar{u}_i(t) = -\mathrm{sign}\left(\langle\zeta,e^{-At}b_i\rangle\right) \,\,\text{a.e.}\,\,t\in [0,T], 
\end{equation} 
for all $i=1,2,\ldots ,M$.
\end{itemize}
\end{Theorem}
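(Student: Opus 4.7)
My plan is to establish the cycle (i)$\Rightarrow$(ii)$\Rightarrow$(iii)$\Rightarrow$(i), exploiting the convex geometry of the reachable set together with the rigidity supplied by the normality hypothesis. The two preliminary tools I would set up first are: via \eqref{equivlin}, $\mathcal{R}_T$ coincides with the image of the convex, weak$^*$-compact set $\mathcal{U}_{ad}$ under the continuous linear map $u\mapsto -\int_0^T e^{-As}Bu(s)\,ds$ (together with the possibility of keeping the trajectory at the origin once it has arrived), hence $\mathcal{R}_T$ is convex and compact; and, for every nonzero $\zeta\in\mathbb{R}^N$, the real-analytic switching function $t\mapsto\langle\zeta, e^{-At}b_i\rangle$ cannot vanish on a set of positive measure, for otherwise each coefficient $\langle\zeta, A^k b_i\rangle$ of its Taylor series would vanish, contradicting the Kalman rank condition. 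A routine calculation of the support function of $\mathcal{R}_t$ in direction $\zeta$ then yields the strict monotonicity $\mathcal{R}_s\subset\mathrm{int}\,\mathcal{R}_t$ for $0\le s<t$.

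With these in hand, the equivalence (i)$\Leftrightarrow$(ii) is immediate: closedness of $\mathcal{R}_T$ together with weak$^*$-compactness of $\mathcal{U}_{ad}$ produces an optimal control whenever $x\in\mathcal{R}_T$, while the strict monotonicity rules out $T(x)<T$ for $x\in\mathrm{bdry}\,\mathcal{R}_T$; the converse direction is analogous. For (ii)$\Rightarrow$(iii), let $\bar u$ be optimal, so that $x=-\int_0^T e^{-As}B\bar u(s)\,ds\in\mathrm{bdry}\,\mathcal{R}_T$, and let $\zeta\in N_{\mathcal{R}_T}(x)$ be a nonzero supporting normal supplied by convexity. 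Rewriting $\langle\zeta, y-x\rangle\le 0$ for all $y\in\mathcal{R}_T$ via the integral representation produces the global variational statement
\begin{equation*}
\int_0^T\langle\zeta, e^{-As}B\bar u(s)\rangle\,ds=\min_{u\in\mathcal{U}_{ad}}\int_0^T\langle\zeta, e^{-As}Bu(s)\rangle\,ds,
\end{equation*}
which a standard measurable-selection (or Lebesgue-point) argument upgrades to pointwise minimization $\langle\zeta, e^{-As}B\bar u(s)\rangle=\min_{u\in[-1,1]^M}\langle\zeta, e^{-As}Bu\rangle$ for a.e.~$s\in[0,T]$. Coordinate-wise separation of $u=(u_1,\dots,u_M)$ then gives the sign formula \eqref{max} at every $s$ for which $\langle\zeta, e^{-As}b_i\rangle\ne 0$, which covers a.e.~$s$ by the analyticity observation above.

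For the closing implication (iii)$\Rightarrow$(i) I would run the previous computation backwards: if $\bar u$ obeys \eqref{max} for some nonzero $\zeta\in N_{\mathcal{R}_T}(x)$, then $\bar u$ pointwise minimizes $\langle\zeta, e^{-As}Bu\rangle$, so the endpoint $x=-\int_0^T e^{-As}B\bar u(s)\,ds$ attains $\max_{y\in\mathcal{R}_T}\langle\zeta, y\rangle$, which forces $x\in\mathrm{bdry}\,\mathcal{R}_T$. The step I expect to demand the most care is the passage from the integral minimization to the pointwise a.e.\ minimization, hand in hand with the a.e.\ nonvanishing of the switching functions; once those two ingredients are in place, the remaining convex-analytic and compactness arguments are standard.
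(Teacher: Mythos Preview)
The paper does not actually supply a proof of this theorem: immediately after the statement it simply writes ``A well known reference for this result is \cite[Sections 13--15]{hls}.'' So there is no in-paper argument to compare against; what you have written is precisely the classical convex-analytic proof one finds in Hermes--LaSalle, and it is correct.

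A couple of minor remarks on your write-up. First, your description of $\mathcal{R}_T$ as the image of $\mathcal{U}_{ad}$ under $u\mapsto -\int_0^T e^{-As}Bu(s)\,ds$ is right, and the parenthetical about ``keeping the trajectory at the origin'' is exactly the point that $0\in\mathcal{U}$, so controls reaching the origin before time $T$ can be padded by $u\equiv 0$; you might make that explicit. Second, your direct argument for (iii)$\Rightarrow$(i) is slightly circular as worded: the hypothesis ``$\bar u$ obeys \eqref{max} for some nonzero $\zeta\in N_{\mathcal{R}_T}(x)$'' already presupposes that $N_{\mathcal{R}_T}(x)\neq\{0\}$, which for a convex body is equivalent to $x\in\mathrm{bdry}\,\mathcal{R}_T$. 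This is harmless, since (iii) as stated in the theorem implicitly carries the existence of the optimal control $\bar u$ from (ii), and you have already established (ii)$\Leftrightarrow$(i); but you could streamline by simply noting that (iii) contains (ii) and invoking the equivalence you already proved, rather than running the support-function computation backwards.
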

\noindent A well known reference for this result is \cite[Sections 13 - 15]{hls}.
\begin{Rem}
If the system (\ref{lin}) is normal then $(A,B)$ satisfies the Kalman rank condition. Therefore the minimum time function 
is everywhere finite and continuous (actually H\"older continuous with exponent $1/N$, see, e.g., Theorem 17.3 in \cite{hls} 
and Theorem 1.9, Chapter IV, in \cite{BCD} and references therein).
\end{Rem}
\noindent Before stating Pontryagin's Principle for the nonlinear case \eqref{generalsys},
we need to introduce the minimized Hamiltonian.
We define for every triple $(x,p,u)\in\R^N\times\R^N\times [-1,1]^M$, the \textit{Hamiltonian}:
\[
 \mathcal{H}(x,p,u)=\langle p, f(x,u)\rangle
\]
and the \textit{minimized Hamiltonian}:
\[
h(x,p)=\min\{ \mathcal{H}(x,p,u) : u\in [-1,1]^M \}. 
\]
Observe that if $\bar{x}$ is steered to the origin with respect to the system (\ref{generalsys}) by the control $\bar{u}(\cdot)$ 
in the time $T$, then the origin is steered to $\bar{x}$ with respect to the reversed dynamics (\ref{generalrev}) 
in the same time $T$ by the control $\tilde{u}(t) := \bar{u}(T-t)$. The corresponding trajectory will be denoted by $\bar{y}(t) := y^{\bar{x},\bar{u}} (T-t)$.
Then Pontryagin's Principle reads as follows.
\begin{Theorem}[Pontryagin's Maximum Principle for nonlinear systems]\label{th:PMPnonlin}
Fix $T > 0$ and let $\bar{x} \in \R^N$ together with an optimal control  steering $\bar{x}$ to the origin in the time $T$. 
Then there exists an absolutely continuous function $\lambda: [0,T] \rightarrow \R^N$, never vanishing, such that
\begin{itemize}
\item[(i)] $\dot{\lambda}(t) = \lambda(t)D_xf(\bar{y}(t),\bar{u}(T-t))$, a.e. $t\in [0,T]$,
\item[(ii)] $\mathcal{H}\left(\bar{y}(t),\lambda(t),\bar{u}(T-t)\right) = h\left(\bar{y}(t),\lambda(t)\right)$, a.e. $t\in [0,T]$,
\item[(iii)] $h\left(\bar{y}(t),\lambda(t)\right) = \text{constant}$, for all $t\in [0,T]$,
\item[(iv)] $\lambda(T) \in N^C_{\mathcal{R}_T}(\bar{x})$.
\end{itemize}
\end{Theorem}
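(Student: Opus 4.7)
The plan is to deduce Theorem \ref{th:PMPnonlin} from the geometric maximum principle applied to the reversed-dynamics problem \eqref{generalrev}. Since $T(\bar x)=T$, the Dynamic Programming Principle places $\bar x$ on the boundary of the reachable set $\mathcal{R}_T$, and the reversed trajectory $\bar y(\cdot)$ driven by $\tilde u(t):=\bar u(T-t)$ reaches $\bar x$ along this boundary. The backbone of the argument is to build a tangent cone to $\mathcal{R}_T$ at $\bar x$ out of needle variations of $\tilde u$ and separate it from a forbidden direction by a nonzero linear functional, which will play the role of $\lambda(T)$.

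For every Lebesgue point $t_0\in(0,T]$ of $\tilde u$ and every $v\in[-1,1]^M$, I would insert a needle: replace $\tilde u$ on $[t_0-\varepsilon,t_0]$ by the constant value $v$. A standard Gronwall-type variational computation shows that the perturbed endpoint at time $T$ is
\[
\bar x+\varepsilon\,\Phi(T,t_0)\bigl[-f(\bar y(t_0),v)+f(\bar y(t_0),\tilde u(t_0))\bigr]+o(\varepsilon),
\]
where $\Phi$ is the fundamental matrix of the linearization $\dot\Phi=-D_xf(\bar y,\tilde u)\Phi$. The convex cone $\mathcal{K}$ generated by all such vectors, together with $-f(\bar y(T),\tilde u(T))$ (the direction corresponding to stopping the trajectory at time $T-\varepsilon$), is contained in the Bouligand tangent cone of $\mathcal{R}_T$ at $\bar x$. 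Since $\bar x\in\mathrm{bdry}\,\mathcal{R}_T$, this cone must be proper, so Hahn--Banach produces a nonzero $\eta\in\R^N$ with $\langle\eta,w\rangle\le 0$ for every $w\in\mathcal{K}$.

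Setting $\lambda(T):=\eta$ and extending $\lambda$ to $[0,T]$ as the solution of (i) gives absolute continuity and non-vanishing automatically, by linearity of the adjoint equation. Using $\lambda(t_0)=\Phi(T,t_0)^\top\lambda(T)$, the separation inequality for a needle of value $v$ rewrites as
\[
\langle\lambda(t_0),f(\bar y(t_0),v)\rangle\ge\langle\lambda(t_0),f(\bar y(t_0),\tilde u(t_0))\rangle,
\]
which is (ii). Differentiating $t\mapsto h(\bar y(t),\lambda(t))$ and invoking (i), (ii) together with the envelope property of $h$ yields $\tfrac{d}{dt}h\equiv 0$ a.e., proving (iii). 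For (iv) I would repeat the separation argument at a sequence of nearby boundary points $x_k\to\bar x$ of $\mathcal{R}_T$, normalize and extract a weak limit of the corresponding separating vectors, and close under convex combinations; this delivers an element of $N^C_{\mathcal{R}_T}(\bar x)=\overline{\mathrm{co}}\,N^L_{\mathcal{R}_T}(\bar x)$ by the very definition recalled in Section \ref{sec:prelim}.

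The main obstacle is precisely this last step. A single separation provides at best a proximal, or limiting, normal, and the upgrade to a Clarke normal requires a compactness-plus-convexification passage that constitutes the actual technical payload of the nonsmooth maximum principle. In the present autonomous, bounded-control setting this passage is classical, and I would invoke \cite[Sections 13--15]{hls} or the nonsmooth calculus of \cite{CLSW} rather than rebuild it from scratch.
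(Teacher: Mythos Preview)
The paper does not prove this theorem: it simply remarks that ``this formulation of the Maximum principle can be obtained by using the classical one (see, e.g., \cite[Theorem 8.7.1]{vin}) for the reversed dynamics.'' Your proposal is thus far more detailed than what the authors actually provide, and for (i)--(iii) your needle-variation sketch is the standard one and essentially correct, modulo two slips: stopping the reversed trajectory at $T-\varepsilon$ yields the tangent direction $+f(\bar y(T),\tilde u(T))$, since $\dot{\bar y}=-f$; and the properness of $\mathcal{K}$ is not a consequence of $\bar x\in\mathrm{bdry}\,\mathcal{R}_T$ alone but of time-optimality, which forbids the ``continuation'' direction $-f(\bar x,\tilde u(T))$ from lying in the interior of $\mathcal{K}$.

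The genuine gap is (iv). Your separating vector $\eta$ lies in the polar of the needle-variation cone $\mathcal{K}$, but $\mathcal{K}$ is in general only a \emph{subcone} of the Bouligand tangent cone to $\mathcal{R}_T$ at $\bar x$, so the polar of $\mathcal{K}$ may strictly contain every normal cone to $\mathcal{R}_T$; nothing forces $\eta$ to be a proximal, limiting, or Clarke normal. Your proposed fix---repeat the separation at nearby boundary points $x_k$ and pass to the limit---does not close this gap, since the same containment issue persists at each $x_k$: a limit of vectors in the polars of the $\mathcal{K}(x_k)$ need not lie in $N^L_{\mathcal{R}_T}(\bar x)$. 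What is actually needed is either a result identifying $\mathcal{K}$ with the full contingent cone to $\mathcal{R}_T$ (this is what \cite{fra}, invoked by the paper in the proof of Proposition~\ref{generalhamilt}, supplies under the convexity hypothesis on $\{f(x,u):u\in\mathcal U\}$), or the full nonsmooth maximum principle, where Clarke-normal transversality comes out of a penalization argument rather than a single geometric separation. The paper takes the second route by quoting \cite{vin}; note that your closing references \cite{hls} (linear systems only) and \cite{CLSW} do not contain this nonlinear Clarke-normal transversality statement.
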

\noindent This formulation of the Maximum principle can be obtained by using the classical one (see, e.g., \cite[Theorem 8.7.1]{vin}) 
for the reversed dynamics.
Observe that the condition (\ref{max}) is equivalent to the minimization condition (ii) for the case of linear systems.

\section{Properties connected with the minimized Hamiltonian}\label{sect:ham}
This section is mainly technical and consists of two subsections.
\subsection{Minimized Hamiltonian and normals to epi$(T)$} \label{sect:normals}
This section is concerned with a relation between normals 
to the sublevels of $T$ and normals to epi$(T)$, which was one of the main tools used in \cite{CMW,GK} in order to prove that 
epi$(T)$ has positive reach. We give here a unified and slightly generalized presentation, in order to use it in the sequel.

We recall first that a point $x \in \R^N \setminus\{0\}$ is defined to be an \emph{optimal point} for (\ref{nonlin}) if there exist 
$x_1$ such that $T(x_1) > T(x)$ and a control $u$ with the property that $y^{x_1,u}(\cdot)$ steers $x_1$ to the origin in the 
optimal time $T(x_1)$ and $y^{x_1,u}\left(T(x_1)-T(x)\right) = x$, i.e., if there exists an optimal trajectory for (\ref{nonlin}) which passes through $x$.
It is easy to see, using Pontryagin's Maximum Principle, that for a normal linear system every point is optimal.
Indeed, it is enough to extend the adjoint vector and choose a control which maximizes the Hamiltonian (for the reversed dynamics). 
For nonlinear two dimensional systems, sufficient conditions for this property were given in \cite[Theorem 6.2]{GK}.\\
Second, we recall that sublevels of $T$ are convex in the case of linear systems (see, e.g, \cite[Lemma 12.1]{hls}) and so,
in particular, they have positive reach. For nonlinear two dimensional systems, sufficient conditions for this property were given
in \cite[Theorem 5.1(c)]{GK}.\\
After the above preliminaries, we can state our result. The assumptions are indeed strong, but we emphasize the fact that they
are all satisfied in the two cases we are going to consider in the present paper (see Theorem 3.7
in \cite{CMW} and Theorem 6.5 in \cite{GK}).
\begin{Proposition} \label{generalhamilt}
Consider the general control system (\ref{generalsys}) with the following assumptions:
\begin{itemize}
\item[(i)] $\mathcal{U} \subset \R^M$ is compact and $\{f(x,u): u \in \mathcal{U}\}$ is convex for every $x \in \R^N$.
\item[(ii)] $f: \R^N \times \mathcal{U} \rightarrow \R^N$ is continuous and satisfies
$$ \|f(x,u) -f(y,u)\| \le L\|x-y\|, \,\,\, \forall x,y \in \R^N, u\in \mathcal{U},
$$ for a positive constant $L$. Moreover, the differential of $f$ with respect to the $x$ variable, $D_xf$, exists everywhere, 
is continuous with respect to both $x$ and $u$ and satisfies 
$$ \|D_xf(x,u) - D_xf(y,u)\| \le L_1\|x-y\|,\,\,\, \forall x,y \in \R^N, u\in \mathcal{U},
$$for a positive constant $L_1$.
\end{itemize}
Let $x\in\mathbb{R}^N\setminus\{ 0\}$ and let $T(x)$ be the minimum time to reach the origin from $x$.
Assume that there exists a neighborhood $\mathcal{V}$ of $x$
such that
\begin{itemize}
\item[(1)] $T$ is finite and continuous in $\mathcal{V}$,
\item[(2)] every $y\in \mathcal{V}$ is an optimal point,
\item[(3)] for every $y\in \mathcal{V}$ the optimal control steering $y$ to the origin is unique and bang-bang with finitely many switchings,
\item[(4)] there exists $r>T(x)$ such that $\mathcal{R}_t$ has positive reach for all $t<r$.
\end{itemize}
Let 
$\zeta \in N_{\mathcal{R}_{T(x)}}(x)$. 
Then
\begin{itemize}
\item[(a)] $h(x,\zeta) \le 0$.
\item[(b)] $\left(\zeta,h(x,\zeta)\right) \in N_{\mathrm{epi}(T)}(x,T(x))$.
\end{itemize}
\end{Proposition}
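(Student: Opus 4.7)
The plan is to prove \textbf{(a)} by a short limiting argument along the optimal trajectory, and then derive \textbf{(b)} by constructing a specific reference curve in $\mathrm{epi}(T)$ whose outward direction at $(x,T(x))$ is exactly $(\zeta,h(x,\zeta))$, and comparing arbitrary epigraph points to this curve via the positive reach of the sublevels $\mathcal{R}_s$.

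For \textbf{(a)}, let $\bar u$ be the unique optimal control steering $x$ to the origin in time $T(x)$ and $y^{x,\bar u}(\cdot)$ the corresponding trajectory. Dynamic programming gives $T(y^{x,\bar u}(t)) \le T(x)-t$, so $y^{x,\bar u}(t)\in \mathcal{R}_{T(x)}$ for every $t \in [0,T(x)]$. Writing the proximal normal inequality
\[
\langle \zeta,\, y^{x,\bar u}(t)-x\rangle \le \sigma\,\|y^{x,\bar u}(t)-x\|^{2},
\]
dividing by $t$ and passing to the limit --- the right limit $\bar u(0^+)$ exists because assumption~(3) forces $\bar u$ to be bang-bang with finitely many switchings --- yields $\langle \zeta, f(x,\bar u(0^+))\rangle \le 0$. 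Since $h(x,\zeta)\le \langle \zeta,f(x,u)\rangle$ for every $u\in \mathcal{U}$, (a) follows.

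For \textbf{(b)}, I would first exhibit the reference curve. Applying Pontryagin's Principle (Theorem \ref{th:PMPnonlin}) to the optimal trajectory reaching $x$, I choose the adjoint arc with $\lambda(T(x))=\zeta$; assumption~(3) makes this identification coherent, and condition~(ii) at $t=T(x)$ gives $\langle \zeta, f(x,\bar u(0))\rangle = h(x,\zeta)$. Let $\bar y$ be the associated reversed trajectory, so $\bar y(T(x))=x$ and $\dot{\bar y}(T(x))=-f(x,\bar u(0))$. Extending $\bar y$ for $s$ near $T(x)$ by the same bang segment keeps $\bar y(s)\in \mathcal{R}_{s}$, and Taylor expansion gives
\[
\langle \zeta,\, \bar y(s)-x\rangle + h(x,\zeta)(s-T(x)) = O\!\bigl((s-T(x))^2\bigr),
\]
so the candidate inequality is attained, up to second order, along this curve.

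For a general $(y,s)\in \mathrm{epi}(T)$ near $(x,T(x))$, i.e.\ $y\in \mathcal{R}_{s}$, I decompose
\[
\langle \zeta, y-x\rangle + h(x,\zeta)(s-T(x)) = \langle \zeta, y-\bar y(s)\rangle + \bigl[\langle \zeta,\bar y(s)-x\rangle + h(x,\zeta)(s-T(x))\bigr].
\]
The bracket is $O((s-T(x))^2)$ by the previous step. For the first summand I select $\zeta_s\in N_{\mathcal{R}_{s}}(\bar y(s))$ via the adjoint arc attached to $\bar y|_{[0,s]}$ and split $\langle \zeta, y-\bar y(s)\rangle = \langle \zeta-\zeta_s, y-\bar y(s)\rangle + \langle \zeta_s, y-\bar y(s)\rangle$: the second piece is $O(\|y-\bar y(s)\|^{2})$ by assumption~(4), with the bending constant $\varphi$ uniformly bounded in a neighborhood of $x$, while the first piece is handled by Young's inequality once the rate $\|\zeta_s-\zeta\|=O(|s-T(x)|)$ is available. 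Using $\|y-\bar y(s)\|\le \|y-x\|+O(|s-T(x)|)$, everything collapses to $O(\|y-x\|^{2}+(s-T(x))^{2})$, proving the proximal normal inequality. \emph{The main obstacle} is precisely the Lipschitz rate $\|\zeta_s-\zeta\|=O(|s-T(x)|)$: its proof rests on the $C^{1,1}$-regularity of $f$ from hypothesis~(ii), guaranteeing well-posedness and smooth dependence of the adjoint system, combined with the uniqueness and bang-bang structure of assumption~(3), which localizes the optimal synthesis near $x$ and allows the adjoint to be propagated coherently for nearby terminal times.
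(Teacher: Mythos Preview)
Your approach is essentially the same as the paper's. Your decomposition
\[
\langle \zeta, y-x\rangle = \langle \zeta_s, y-\bar y(s)\rangle + \langle \zeta-\zeta_s, y-\bar y(s)\rangle + \langle \zeta,\bar y(s)-x\rangle
\]
matches term-for-term the paper's splitting into $(I)+(II)+(III)$, with $x_1=\bar y(s)$ and $\lambda(\beta)=\zeta_s$; the paper just organises it via explicit case distinctions $\beta\lessgtr T(x)$ and $T(y)\lessgtr T(x)$, while you treat both sides of $T(x)$ at once. Part (a) is also the same idea, though the paper uses convexity of the velocity set rather than the right limit $\bar u(0^+)$.

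One point deserves tightening. For $s>T(x)$ you write that ``extending $\bar y$ by the same bang segment keeps $\bar y(s)\in\mathcal{R}_s$''. That is true, but it is not enough: to use the positive-reach inequality you need $\bar y(s)\in\mathrm{bdry}\,\mathcal{R}_s$ and $\zeta_s\in N_{\mathcal{R}_s}(\bar y(s))$, which fails if the bang extension is suboptimal and lands in the interior. The paper handles this not by extending the control, but by invoking assumption~(2): since $x$ is an optimal point, there is a genuinely optimal trajectory through $x$ from some $x_1$ with $T(x_1)>T(x)$, and one extends $\lambda$ along \emph{that} trajectory; assumption~(3) is then used to check that the minimized Hamiltonian remains constant along the extension. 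Your closing remarks gesture at this, but the specific role of assumption~(2) in producing an optimal (hence boundary-tracking) extension should be made explicit.
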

\begin{proof}
Since $\zeta \in N_{\mathcal{R}_{T(x)}}(x)$ and $\mathcal{R}_{T(x)}$ has positive reach, there exists a constant $\sigma \ge 0$ such that
\[
\langle\zeta,y-x\rangle \le \sigma \|\zeta\|\,\|y-x\|^2,
\]
for all $y \in \mathcal{R}_{T(x)}$.\\
Let $(\bar{x}(\cdot),\bar{u}(\cdot))$ be an optimal pair for $x$ 
and let $\lambda:[0,T(x)] \rightarrow \R^N$ be absolutely continuous and such that
\begin{equation}\label{adj}
\left\{\begin{array}{ll}
\dot{\lambda}(t)\: = \:\lambda(t)D_xf(\bar{x}(T(x)-t),\bar{u}(T(x)-t)) ,\\
\lambda(T(x))  \: = \:  \zeta.
\end{array}\right.
\end{equation}
and $h(\bar{x}(t),\lambda(t)) = \langle \lambda(t), f(\bar{x}(T(x)-t),\bar{u}(T(x)-t)) \rangle= h(x,\zeta)$ for all $t \in [0,T(x)]$
(see Theorem 3.1 in \cite{CFS}).

Observing that $\bar{x}(t)\in\mathcal{R}_T(x)$ for all $0 <t < T(x)$,
we have
\[
\langle\zeta,\bar{x}(t)-x\rangle \le \sigma \|\zeta\|\,\|\bar{x}(t) -x\|^2.
\]
By using Gronwall's lemma, there is a suitable $M_1>0$ such that $\|\bar{x}(t) - x\| \le M_1t$.
Since $f$ is Lipschitz with respect to $x$, we have for some $M_2>0$ and for all $0<t<T(x)$,
\[
\Big\langle \zeta, \frac{1}{t}\int_{0}^{t} {f(x,\bar{u}(s)ds}\Big\rangle \le M\|\zeta\|t.
\]
Taking the upper limit, we obtain
\[
\limsup_{t\to 0+}\Big\langle \zeta, \frac{1}{t} \int_{0}^{t} {f(x,\bar{u}(s)ds}\Big\rangle \le 0,
\]
which implies
\[
\langle \zeta,f(x,\tilde{u})\rangle \le 0,
\]
for some $\tilde{u} \in \mathcal{U}$, since $\{f(x,u): u\in \mathcal{U}\}$ is convex and $\mathcal{U}$ is compact,
whence $h(x,\zeta) \le 0$.

We are now going to show that $(\zeta,h(x,\zeta)) \in N_{\mathrm{epi}(T)}(x,T(x))$, i.e., there is a $\sigma \ge 0$ such that
\begin{equation}\label{norepi}
\langle (\zeta,\vartheta),(y,\beta) - (x,T(x)\rangle \le \sigma (\|y-x\|^2 + |T(x) - \beta|^2),
\end{equation}
for all $(y,\beta)$ in a neighborhood of $(x, T(x))$, say $(y,\beta)\in\mathcal{V}\times [0,r]$, where $\vartheta := h(x,\zeta)$, $\beta\ge T(y)$,
and $r > T(x)$ is such that $T(y) < r$ for all $y\in \mathcal{V}$.
There are two possible cases:
\begin{itemize}
\item[(i)] $T(y) \le T(x)$,
\item[(ii)] $T(y) > T(x)$.
\end{itemize}
In the first case, since $y\in \mathcal{R}_{T(x)}$ and $\mathcal{R}_{T(x)}$ has positive reach, there is a $K_1 \ge 0 $ such that
\[ 
\langle\zeta, y-x\rangle \le K_1\|\zeta\|\,\|y-x\|^2.
\]
Since $\vartheta =h(x,\zeta) \le 0$, if $\beta \ge T(x)$ then (\ref{norepi}) is satisfied.
If instead $\beta < T(x)$, then we set $x_1 = \bar{x}(T(x) - \beta)$. By Gronwall's lemma, there is some $K > 0$ such that 
\[
\|x-x_1\| = \|x - \bar{x}(T(x) - \beta)\| \le K|T(x) - \beta|.
\]
We have
\begin{eqnarray*}
\langle\zeta, y-x\rangle &=& \langle \lambda(\beta), y - x_1\rangle + \langle \lambda(T(x))-\lambda(\beta), y-x_1\rangle + \langle\zeta, x_1 - x\rangle\\
&=&: (I) + (II) +(III).
\end{eqnarray*}
We now consider $(I)$. Since $y\in \mathcal{R}_{\beta}$ and
$\lambda(\beta) \in N^C_{\mathcal{R}_\beta}(x_1)$ (see Definition 2.3 and Corollary 4.8 in \cite{fra}), 
owing to the fact that $\mathcal{R}_{\beta}$ has positive reach, there exist $K_2, K_3 > 0$ such that
\begin{eqnarray*}
(I) &\le& K_2\|\lambda(\beta)\|\,\|y-x_1\|^2\\
&\le & 2 K_2\|\lambda(\beta)\|(\|y-x\|^2 +\|x-x_1\|^2)\\
&\le & K_3(\|y-x\|^2 +|T(x)-\beta|^2).
\end{eqnarray*} 
Let us now consider $(II)$. We have, for suitable constants $K_4,K_5 >0$,
\begin{eqnarray*}
  (II) &\le & \|\lambda(T(x)) - \lambda(\beta)\|\, \|y-x_1\| \\
  &\le & K_4|T(x) - \beta|(\|y-x\| + K|T(x) - \beta|)\\
  &\le & K_5(\|y-x\|^2 + |T(x) - \beta|^2).
\end{eqnarray*}
Finally, we have, for a suitable constant $K_6 >0$,
\begin{eqnarray*}
 (III) &=& \int_{0}^{T(x)-\beta}\! \langle  \lambda(T(x)), f(\bar{x}(s),\bar{u}(s)\rangle ds\\
 &=&\int_{0}^{T(x)-\beta}\!\! \langle \lambda(T(x)-s), f(\bar{x}(s),\bar{u}(s)\rangle ds + \int_{0}^{T(x)-\beta}\!\!\langle 
\lambda(T(x)) - \lambda(T(x)-s), f(\bar{x}(s),\bar{u}(s)\rangle ds\\
 &&\text{(since the minimized Hamiltonian is constant)}\\
 &\le & (T(x)-\beta)h(x,\zeta) +  \int_{\beta}^{T(x)}{K_6|T(x) -s|ds}\\
 &= & (T(x)-\beta)h(x,\zeta) +  \frac{K_6}{2}|T(x) -\beta|^2.
\end{eqnarray*}
Putting the estimates together, we obtain
\[
\langle\zeta, y -x\rangle \le (T(x) - \beta)h(x,\zeta) + K_7(\|y-x\|^2 + |T(x) - \beta|^2),
\]
for a suitable positive constant $K_7$. The proof of (\ref{norepi}) is concluded in the case (i).
  
We are now going to consider the case (ii). Since $\vartheta \le 0$, it is enough to prove (\ref{norepi}) for $\beta = T(y)$.\\
Since $x$ is an optimal point, there exists $x_1$ such that $T(x_1)=T(y)$ together with an optimal
pair, still denoted $\bar{x}(\cdot)$ and $\bar{u}(\cdot)$, such that $\bar{x}(T(y)-T(x))=x$. Let $\lambda(\cdot)$ denote the extension up to the time $T(y)$
of the solution of \eqref{adj}. Since the  optimal control is unique and
bang-bang with finitely many switchings, it is easy to prove that $h(\bar{x}(T(y)-t),\lambda(t)) =
\langle \lambda(t), f(\bar{x}(T(y)-t),\bar{u}(T(y)-t)) \rangle=
\text{constant}$ for all $t \in [0,T(y)]$. Then by using the same argument of the case (i), 
one can easily show that (\ref{norepi}) holds true.
The proof is complete.
\end{proof}

\subsection{Minimized Hamiltonian and non-Lipschitz points}\label{sec:minham}
This section is devoted to identify points around which the minimum time function $T$ is not Lipschitz as points where the proximal 
normal cone to epi$(T)$ contains a horizontal vector $\zeta \ne 0$. It will also turn out that if $x$ is a non-Lipschitz point and 
$\zeta$ is such a vector, then $h(x,\zeta) = 0$. A kind of converse statement can also be proved.

All results are valid in the domain where epi$(T)$ has positive reach. So, for linear systems, they hold globally, while for nonlinear 
two dimensional systems of the type (\ref{nonlin}), they hold in a neighborhood of the origin whose size can be estimated and depends only on the data.

\begin{Definition} \label{definonlip} We say that a function $T: \R^N \rightarrow \R$ is non-Lipschitz at $x$ provided there exist 
two sequences $\{x_i\}, \{y_i\}$ such that $x_i \ne y_i$ for all $i$, $\{x_i\},\{y_i\}$ converge to $x$ and
$$ \limsup_{i\to\infty} {\frac{|T(y_i) - T(x_i)|}{\|y_i - x_i\|}} = +\infty .$$
\end{Definition}
Observe that the set of non-Lipschitz points is closed.\\
The first result does not require $T$ to be a minimum time function.
\begin{Proposition}\label{Nonlip}
Let $\Omega \subset \R^N$ be open and let $T $ be continuous in $\Omega$ and such that $epi(T)$ has locally positive reach.
Let $\bar{x }\in \Omega$. Then $T$ is non-Lipschitz at $\bar{x}$ if and only if there exists a nonzero vector $\zeta \in \R^N$ such that
\[
(\zeta,0) \in N_{\mathrm{epi}(T)}(\bar{x},T(\bar{x})).
\]
\end{Proposition}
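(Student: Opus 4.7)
Two implications must be established.

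\emph{Sufficiency} $(\Leftarrow)$. Suppose $(\zeta,0)\in N_{\mathrm{epi}(T)}(\bar x,T(\bar x))$ with $\zeta\ne 0$; by definition of proximal normal, there is $\sigma\ge 0$ with
\[
\langle \zeta,y-\bar x\rangle \;\le\; \sigma\bigl(\|y-\bar x\|^{2}+|\beta-T(\bar x)|^{2}\bigr)
\]
for all $(y,\beta)\in\mathrm{epi}(T)$ near $(\bar x,T(\bar x))$. Testing with $x_{i}=\bar x$, $y_{i}=\bar x+t_{i}\zeta/\|\zeta\|$ and $\beta_{i}=T(y_{i})$ for $t_{i}\downarrow 0$ yields $t_{i}\|\zeta\|\le\sigma\bigl(t_{i}^{2}+|T(y_{i})-T(\bar x)|^{2}\bigr)$; once $t_{i}<\|\zeta\|/(2\sigma)$, this forces $|T(y_{i})-T(\bar x)|\ge\sqrt{t_{i}\|\zeta\|/(2\sigma)}$, and so the quotient $|T(y_{i})-T(x_{i})|/\|y_{i}-x_{i}\|$ diverges, verifying Definition \ref{definonlip}.

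\emph{Necessity} $(\Rightarrow)$. Given $x_{i}\ne y_{i}$ in $\Omega$ with $x_{i},y_{i}\to\bar x$ and $\rho_{i}:=|T(y_{i})-T(x_{i})|/\|y_{i}-x_{i}\|\to+\infty$ (assume $T(y_{i})>T(x_{i})$ after relabelling), the plan is to construct a sequence of proximal normals to $\mathrm{epi}(T)$ whose vertical component becomes negligible compared to the horizontal one, and then to invoke positive reach to pass to the limit. I would apply the Clarke--Ledyaev multidirectional mean value inequality (see, e.g., \cite{CLSW}) to the continuous function $T$ with base point $x_{i}$, target $\{y_{i}\}$, and threshold $T(y_{i})-T(x_{i})-\varepsilon_{i}$, choosing $\varepsilon_{i}=\|y_{i}-x_{i}\|$: this produces a point $z_{i}$ at distance at most $\varepsilon_{i}$ from $[x_{i},y_{i}]$ and a proximal subgradient $v_{i}\in\partial T(z_{i})$ with $\langle v_{i},y_{i}-x_{i}\rangle>T(y_{i})-T(x_{i})-\varepsilon_{i}$. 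Hence $\|v_{i}\|\ge \rho_{i}-1\to+\infty$ while $z_{i}\to\bar x$. Since $v_{i}\in\partial T(z_{i})$ is equivalent to $(v_{i},-1)\in N_{\mathrm{epi}(T)}(z_{i},T(z_{i}))$, the unit vectors
\[
\nu_{i}\;:=\;\frac{(v_{i},-1)}{\|(v_{i},-1)\|}
\]
are proximal normals whose last coordinate $-1/\|(v_{i},-1)\|$ tends to $0$; a subsequence then converges to some $(\zeta,0)$ with $\|\zeta\|=1$.

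To promote this limit to a proximal normal at $(\bar x,T(\bar x))$, I use Definition \ref{posreach}: choose a compact neighborhood of $(\bar x,T(\bar x))$ in $\mathrm{epi}(T)$ on which the continuous function $\varphi$ is bounded by some $M$. For $i$ large, each $\nu_{i}$ obeys
\[
\langle\nu_{i},Y-(z_{i},T(z_{i}))\rangle \;\le\; M\,\|Y-(z_{i},T(z_{i}))\|^{2},\qquad\forall\,Y\in\mathrm{epi}(T).
\]
Letting $i\to\infty$, continuity of $T$ gives $(z_{i},T(z_{i}))\to(\bar x,T(\bar x))$ and the inequality passes to the limit with $(\zeta,0)$ based at $(\bar x,T(\bar x))$; hence $(\zeta,0)\in N_{\mathrm{epi}(T)}(\bar x,T(\bar x))$ with $\zeta\ne 0$, as required. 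The main delicate step is the quantitative use of Clarke--Ledyaev: the perturbation $\varepsilon_{i}$ must be chosen so that $\varepsilon_{i}\to 0$ (to force $z_{i}\to\bar x$) while $\varepsilon_{i}/\|y_{i}-x_{i}\|$ remains bounded (to force $\|v_{i}\|\to+\infty$); the closure of proximal normals in the positive-reach setting is then routine once the uniform constant $M$ is in hand.
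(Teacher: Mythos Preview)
Your proof is correct but takes a genuinely different route from the paper. The paper's proof is a two-line citation: it invokes Theorem~9.13 of Rockafellar--Wets, which states that a lower semicontinuous function is locally Lipschitz around $\bar x$ if and only if its (limiting) horizon subdifferential at $\bar x$ reduces to $\{0\}$; since positive reach of $\mathrm{epi}(T)$ makes the proximal and limiting normal cones coincide, the paper's definition of $\partial^{\infty}T(\bar x)$ via proximal normals matches the one used in that theorem, and the equivalence follows immediately.

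Your argument is instead self-contained. For $(\Leftarrow)$ you read off the blow-up of the difference quotient directly from the proximal normal inequality, which is clean (the case $\sigma=0$ is harmless since it forces $\langle\zeta,y-\bar x\rangle\le 0$ for all nearby $y$, contradicting $\zeta\ne 0$). For $(\Rightarrow)$ you manufacture proximal subgradients of arbitrarily large norm via the Clarke--Ledyaev multidirectional mean value inequality, then use the uniform proximal normal constant furnished by positive reach to close up the limiting normal $(\zeta,0)$ as a \emph{proximal} normal at the base point. This last closure step is exactly where the positive-reach hypothesis enters in your approach, whereas in the paper's approach it enters (implicitly) in identifying the proximal and limiting horizon subdifferentials. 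Your route has the advantage of not depending on the precise formulation of the Rockafellar--Wets result and of making the role of positive reach fully explicit; the paper's route is shorter but leans on a nontrivial black box.
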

\begin{proof}
By Theorem 9.13 in \cite{RW}, $T$ is non-Lipschitz at $\bar{x}$ if and only if $\partial ^{\infty}T(\bar{x})$ contains a nonzero vector $\zeta$. 
This condition is equivalent to $(\zeta,0) \in N_{\mathrm{epi}(T)}(\bar{x},T(\bar{x}))$.
\end{proof}
Now we restrict ourselves to the case where $T$ is the minimum time function to reach the origin for (\ref{lin}) or for (\ref{nonlin}). 
We assume that the conditions ensuring that epi$(T)$ has positive reach are satisfied (see \cite[Theorem 3.7]{CMW} and
\cite[Theorem 6.5]{GK}).
\begin{Proposition}\label{Hamzero}
Let $T$ denote the minimum time function to reach the origin for (\ref{lin}) or for (\ref{nonlin}). Let $\bar{x} \ne 0$ and $\delta > 0$ 
be such that the epigraph of $T$ restricted to $\bar{B}(\bar{x},\delta)$ has positive reach. Let $\zeta \in \R^N$, $\zeta \neq 0$. Then
\[
\zeta \in \partial ^{\infty}T(\bar{x}) \,\,\, \text{if and only if}\,\,\, h(\bar{x},\zeta) = 0 \,\,\text{and}\,\, 
\zeta \in N_{\mathcal{R}_{T(\bar{x})}}(\bar{x}).
\]
\end{Proposition}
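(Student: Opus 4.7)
My plan is to prove the two implications separately, relying on Proposition~\ref{generalhamilt} together with a perturbation argument along a trajectory of the reverse dynamics.

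The easy direction, ``$h(\bar{x},\zeta)=0$ and $\zeta\in N_{\mathcal{R}_{T(\bar{x})}}(\bar{x})$ $\Rightarrow$ $\zeta\in\partial^{\infty}T(\bar{x})$'', follows immediately from Proposition~\ref{generalhamilt}(b): one has $(\zeta,h(\bar{x},\zeta))=(\zeta,0)\in N_{\mathrm{epi}(T)}(\bar{x},T(\bar{x}))$, which is exactly the defining condition for $\zeta\in\partial^{\infty}T(\bar{x})$.

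For the converse, assume $\zeta\ne 0$ and $(\zeta,0)\in N_{\mathrm{epi}(T)}(\bar{x},T(\bar{x}))$, so that there exists $\sigma\ge 0$ with $\langle \zeta,y-\bar{x}\rangle\le\sigma(\|y-\bar{x}\|^2+(\beta-T(\bar{x}))^2)$ for every $(y,\beta)\in\mathrm{epi}(T)$ close to $(\bar{x},T(\bar{x}))$. Specializing to $\beta=T(\bar{x})$ with $y\in\mathcal{R}_{T(\bar{x})}$ yields $\zeta\in N_{\mathcal{R}_{T(\bar{x})}}(\bar{x})$, and Proposition~\ref{generalhamilt}(a) then gives $h(\bar{x},\zeta)\le 0$. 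The main task is therefore to prove the reverse inequality $h(\bar{x},\zeta)\ge 0$.

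To this end, I would fix an arbitrary $u_0\in\mathcal{U}$ and consider the reverse-dynamics trajectory $w$ defined by $\dot w(t)=-f(w(t),u_0)$ with $w(0)=\bar{x}$. The reparametrization $s\mapsto w(t-s)$ is an admissible trajectory of \eqref{generalsys} with constant control $u_0$ that steers $w(t)$ back to $\bar{x}$ in time $t$; concatenating with an optimal control from $\bar{x}$ to the origin produces $T(w(t))\le T(\bar{x})+t$, so $(w(t),T(\bar{x})+t)\in\mathrm{epi}(T)$ for all $t>0$ sufficiently small. Inserting this point into the normal inequality, dividing by $t$, and letting $t\to 0^+$ while using $\|w(t)-\bar{x}\|^2=O(t^2)$, one obtains $\langle\zeta,-f(\bar{x},u_0)\rangle\le 0$, i.e., $\langle\zeta,f(\bar{x},u_0)\rangle\ge 0$. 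Since $u_0\in\mathcal{U}$ is arbitrary, this forces $h(\bar{x},\zeta)\ge 0$, hence $h(\bar{x},\zeta)=0$. The only slightly delicate point is to ensure that $w(t)$ remains in the ball where $\mathrm{epi}(T)$ has positive reach, which is transparent by continuity of the flow for $t$ small; the asymptotic estimate on $\|w(t)-\bar{x}\|$ is then the only routine ingredient needed to kill the $\sigma$-term after dividing by $t$.
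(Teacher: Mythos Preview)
Your proof is correct, and the backward direction as well as the first half of the forward direction (obtaining $\zeta\in N_{\mathcal{R}_{T(\bar{x})}}(\bar{x})$ and $h(\bar{x},\zeta)\le 0$ from Proposition~\ref{generalhamilt}(a)) coincide with the paper's argument. Where you diverge is in showing $h(\bar{x},\zeta)\ge 0$. The paper argues by contradiction: assuming $h(\bar{x},\zeta)<0$, Proposition~\ref{generalhamilt}(b) yields a vector $(\alpha\zeta,-1)\in N_{\mathrm{epi}(T)}(\bar{x},T(\bar{x}))$; taking convex combinations with $(\zeta,0)$ and rescaling produces proximal subgradients $\xi_\lambda\in\partial T(\bar{x})$ with $\|\xi_\lambda\|\to\infty$, and the Hamilton--Jacobi identity $h(\bar{x},\xi)=-1$ for every $\xi\in\partial T(\bar{x})$ (quoted from \cite{WYu}) together with positive homogeneity of $h$ in $\zeta$ forces $h(\bar{x},\zeta)=0$. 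Your route is more direct and more self-contained: by running the reversed dynamics with a constant control you manufacture points $(w(t),T(\bar{x})+t)\in\mathrm{epi}(T)$, insert them into the proximal normal inequality for $(\zeta,0)$, divide by $t$, and read off $\langle\zeta,f(\bar{x},u_0)\rangle\ge 0$ for every $u_0\in\mathcal{U}$. This avoids both the contradiction structure and the external reference to \cite{WYu}, at the modest cost of the dynamic-programming observation $T(w(t))\le T(\bar{x})+t$; it is essentially the standard ``subsolution'' half of the Hamilton--Jacobi argument specialized to horizontal normals.
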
 
\begin{proof}
Recalling Proposition \ref{Nonlip}, $\zeta \in \partial ^{\infty}T(\bar{x})$ if and only if $(\zeta,0) \in N_{\mathrm{epi}(T)}(\bar{x},T(\bar{x}))$, 
i.e., for a suitable constant $c\ge 0$,
\begin{equation} \label{3.3}
\langle \zeta, y - \bar{x}\rangle \le c\|\zeta\| \left(\|y-\bar{x}\|^2 +|\beta - T(\bar{x})|^2\right),
\end{equation}
for all $y\in \bar{B}(\bar{x},\delta)$ and for all $\beta \ge T(y)$.

Let $\zeta\in \partial ^{\infty}T(\bar{x})$. If $y\in \mathcal{R}_{T(\bar{x})}$, then we can take $\beta = T(\bar{x})$ in (\ref{3.3}) and so
$$
\langle \zeta, y - \bar{x}\rangle \le c\|\zeta\|\cdot\|y-\bar{x}\|^2,
$$
i.e., $\zeta \in N_{\mathcal{R}_{T(\bar{x})}}(\bar{x})$.

Recalling (i) in Proposition \ref{generalhamilt}, if $\zeta \in N_{\mathcal{R}_{T(\bar{x})}}(\bar{x})$ then $h(\bar{x},\zeta) \le 0$. 
Assume by contradiction that $h(\bar{x},\zeta) <0$. Then, by using (ii) in Proposition \ref{generalhamilt} there exists $\alpha > 0$ such that
\[
 (\alpha \zeta, -1) \in N_{\mathrm{epi}(T)}(\bar{x},T(\bar{x})).
\]
Since $N_{\mathrm{epi}(T)}(\bar{x},T(\bar{x}))$ is convex, we have, for any $\lambda \in (0,1)$,
\begin{eqnarray*}
v_{\lambda}&:=& \lambda (\zeta,0) + (1-\lambda)(\alpha\zeta, -1)\\
&=& (\lambda\zeta + (1-\lambda)\alpha\zeta, \lambda -1) \in N_{\mathrm{epi}(T)}(\bar{x},T(\bar{x})).
\end{eqnarray*}
This implies
$$ \frac{v_\lambda}{1-\lambda} = \left(\frac{\lambda\zeta + (1-\lambda)\alpha\zeta}{1-\lambda},-1\right) \in N_{\mathrm{epi}(T)}(\bar{x},T(\bar{x})),
$$
i.e., 
$$ \frac{v_\lambda}{1-\lambda} \in \partial T(\bar{x}).
$$
By Theorem 5.1(b) in \cite{WYu}, we have 
$$h\left(\bar{x},\frac{\lambda\zeta + (1-\lambda)\alpha\zeta}{1-\lambda} \right) = -1, $$
i.e., $h(\bar{x},\lambda\zeta + (1-\lambda)\alpha\zeta) = \lambda-1$, for all $\lambda \in (0,1)$.
Letting $\lambda \to 1^{-}$ in the above equality, we obtain $h(\bar{x},\zeta) = 0$, which is a contradiction. Thus $h(\bar{x},\zeta) = 0$.

Conversely, let $\zeta \in N_{\mathcal{R}_{T(\bar{x})}} (\bar{x})$ be a nonzero vector such that $h(\bar{x},\zeta) = 0$. 
Applying Proposition \ref{generalhamilt}, we see that $(\zeta,0) \in N_{\mathrm{epi}(T)}(\bar{x},T(\bar{x}))$, which says 
exactly that $\zeta \in \partial^{\infty}T(\bar{x})$. The proof is concluded.

\end{proof}

\section{Linear systems in $\R^N$}\label{sec:lin}
This section is devoted to the study of non-Lipschitz points for the minimum time function to reach the origin for a linear 
autonomous system of the form:
\begin{equation}\label{linsys}
\dot{x} = Ax + Bu
\end{equation}
where $A\in \mathbb{M}_{N\times N}$ and $B\in \mathbb{M}_{N\times M}$, $1 \le M \le N$, and $u \in [-1,1]^M$.

We assume that \eqref{linsys} is normal, i.e., for each column $b_i, i = 1,\dots,M$, of $B$, the Kalman rank condition
\begin{equation} \label{kalman}
\mathrm{rank}[b_i,Ab_i,\dots,A^{N-1}b_i] = N
\end{equation}
holds. We set also
\begin{equation} \label{rkB}
k = \mathrm{rank}B.
\end{equation}
Of course, $1 \le k\le M$. 

We will first characterize the set $\mathcal{S}$ of non-Lipschitz points of $T$ as
\begin{equation}\label{defS}
\begin{split}
\mathcal{S} &= \Big\{
x\in \R^N: \,\text{there exist}\, r>0\,\text{and}\, \zeta \in \mathbb{S}^{N-1} \text{ such that }\\
&\qquad x = \sum_{i=1}^M \int_{0}^{r}e^{A(t-r)}b_i\,\mathrm{sign}\left(\langle \zeta,e^{At}b_i\rangle\right)dt \,\text{ and }\,
\langle \zeta,b_i\rangle = 0\;\forall i = 1,\dots, M
\Big\}.
\end{split}
\end{equation}
If $k=N$, then $\mathcal{S}$ is empty. If $k <N$, $\mathcal{S}$ is nonempty and
we will prove also that $\mathcal{S}$ is $(N-k)$-rectifiable, with positive and locally finite
$\mathcal{H}^{N-k}$-measure. The positivity part will be contained in Section \ref{sec:propag}.

>From now on, we assume
\begin{equation}\label{condk}
k<N.
\end{equation}
\begin{Rem}\label{remnewS}
It is easy to see that
\begin{equation}\label{NewS}
\begin{split}
\mathcal{S} &= \Big\{
x\in \R^N: \text{there exist }\, r >0\,\text{ and }\, \zeta \in \mathbb{S}^{N-1}\,\text{ such that }\\
&\qquad\qquad x = \sum_{i=1}^M \int_{0}^{r}e^{-At}b_i\,\mathrm{sign}\left(\langle \zeta,e^{-At}b_i\rangle\right)dt,\\
&\qquad\qquad\qquad\quad \zeta \in N_{\mathcal{R}_r}(x)
\,\text{ and }\,\langle \zeta,e^{-Ar}b_i\rangle = 0\;\;\forall i = 1,\dots, M
\Big\}.
\end{split}
\end{equation}
\end{Rem}
\noindent We state first a technical lemma concerning an explicit computation of the minimized Hamiltonian.
Before stating it, let us observe that condition \eqref{kalman} implies that the function
$t\mapsto\langle\bar{\zeta},e^{-At}b_i\rangle$ is not identically $0$ for all $i=1,\dots,M$.
\begin{Lemma}\label{lemmaham}
Let $r>0$, $\bar{x}\in \R^N$ and $\bar{\zeta}\in \mathbb{S}^{N-1}$ be such that
\[
\bar{x} = \sum_{i=1}^M\int_{0}^{r}{e^{-At}b_i\,\mathrm{sign}\left(\langle\bar{\zeta},e^{-At}b_i\rangle\right)dt}.
\]
Then
\begin{equation} \label{compham}
h(\bar{x},\bar{\zeta}) = -\sum_{i=1}^{M} \Big| \langle\bar{\zeta},e^{-Ar}b_i\rangle \Big|.
\end{equation}
\end{Lemma}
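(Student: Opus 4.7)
The plan is to unpack the definition of the minimized Hamiltonian for the linear system and exploit the fact that the integrand defining $\bar{x}$ is essentially the derivative of the absolute value of the switching function $g_i(t):=\langle\bar\zeta,e^{-At}b_i\rangle$. Since
\[
h(x,\zeta)=\langle\zeta,Ax\rangle+\min_{u\in[-1,1]^M}\langle\zeta,Bu\rangle=\langle\zeta,Ax\rangle-\sum_{i=1}^{M}|\langle\zeta,b_i\rangle|,
\]
the whole content of the lemma is the identity
\[
\langle\bar\zeta,A\bar x\rangle=\sum_{i=1}^{M}|\langle\bar\zeta,b_i\rangle|-\sum_{i=1}^{M}|\langle\bar\zeta,e^{-Ar}b_i\rangle|,
\]
and the rest is a routine cancellation.

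To establish this identity, I would substitute the hypothesis on $\bar x$ and move $A$ inside the integral:
\[
\langle\bar\zeta,A\bar x\rangle=\sum_{i=1}^{M}\int_{0}^{r}\langle\bar\zeta,Ae^{-At}b_i\rangle\,\mathrm{sign}(g_i(t))\,dt.
\]
Using $Ae^{-At}=-\frac{d}{dt}e^{-At}$, the inner product under the integral equals $-g_i'(t)$. Hence, for each $i$, the integrand becomes $-g_i'(t)\,\mathrm{sign}(g_i(t))$.

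The key observation is that the switching function $g_i$ is real-analytic (it is a linear combination of exponentials multiplied by polynomials, coming from $e^{-At}b_i$), so by the Kalman rank condition \eqref{kalman} it is not identically zero and therefore has only finitely many zeros in $[0,r]$. Off this finite set, $|g_i|$ is differentiable with $\frac{d}{dt}|g_i(t)|=g_i'(t)\,\mathrm{sign}(g_i(t))$, and $|g_i|$ is absolutely continuous on $[0,r]$. Thus, by the fundamental theorem of calculus,
\[
\int_{0}^{r}g_i'(t)\,\mathrm{sign}(g_i(t))\,dt=|g_i(r)|-|g_i(0)|=|\langle\bar\zeta,e^{-Ar}b_i\rangle|-|\langle\bar\zeta,b_i\rangle|.
\]
Summing over $i$ and inserting the minus sign yields the displayed identity for $\langle\bar\zeta,A\bar x\rangle$, and combining it with the formula for $h$ gives \eqref{compham}.

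There is really no substantial obstacle here; the only point that requires a brief justification is the a.e.\ identity $g_i'\,\mathrm{sign}(g_i)=(|g_i|)'$, which I would dispatch in one line by invoking analyticity of $g_i$ (hence the finiteness of its zero set). Everything else is a direct substitution and the fundamental theorem of calculus.
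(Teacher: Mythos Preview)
Your proof is correct and follows the same core idea as the paper's: both reduce the computation of $h(\bar x,\bar\zeta)$ to evaluating $\int_0^r g_i'(t)\,\mathrm{sign}(g_i(t))\,dt$ for the switching function $g_i(t)=\langle\bar\zeta,e^{-At}b_i\rangle$, and both rely on the normality condition to ensure $g_i\not\equiv 0$, hence has only finitely many zeros on $[0,r]$. The only difference is in how this integral is handled. The paper splits $[0,r]$ at the zeros $t_1<\dots<t_k$ of $g_i$, integrates $-\dot g_i\,\mathrm{sign}(g_i)$ explicitly on each subinterval, and then performs a short case analysis according to whether $g_i(0)$ and $g_i(r)$ vanish. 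You instead observe in one line that $|g_i|$ is absolutely continuous with $(|g_i|)'=g_i'\,\mathrm{sign}(g_i)$ a.e., so the fundamental theorem of calculus gives $|g_i(r)|-|g_i(0)|$ directly. Your packaging is cleaner and bypasses the endpoint case distinctions, but the two arguments are essentially the same computation.
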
 
\begin{proof}
We have
\begin{eqnarray*}
h(\bar{x},\bar{\zeta}) &=& \langle\bar{\zeta},A\bar{x}\rangle + \min_{u\in [-1,1]^M}\langle\bar{\zeta},Bu\rangle\\
&=&\langle\bar{\zeta},A\bar{x}\rangle + \min_{\begin{subarray}{c}|u_i|\le 1\\i=1,...,M\end{subarray}}\sum_{i=1}^M\langle\bar{\zeta},b_iu_i\rangle\\
&=& \Big\langle \bar{\zeta},\sum_{i=1}^M\int_{0}^{r}{Ae^{-At}b_i\,\mathrm{sign}\left(\langle\bar{\zeta},e^{-At}b_i\rangle\right)dt}\Big\rangle 
-\sum_{i=1}^M|\langle\bar{\zeta},b_i\rangle|\\
&=& \sum_{i=1}^M  \left( \int_{0}^{r}{\big\langle\bar{\zeta},Ae^{-At}b_i\,\mathrm{sign}\left(\langle\bar{\zeta},e^{-At}b_i\rangle\right)\big\rangle dt}-
|\langle\bar{\zeta},b_i\rangle |\right)\\
&=:& \sum_{i=1}^Mh_i(\bar{x},\bar{\zeta}).
\end{eqnarray*}
Set $g_i(t) :=\langle\bar{\zeta},e^{-At}b_i\rangle$, $t\ge 0$. Then being not identically zero, $g_i$ vanishes at most finitely 
many times in $[0,r]$, say at $0\le t_1 < \dots < t_k \le r$. We have, for $i=1,\dots,M$,
\begin{eqnarray*}
h_i(\bar{x},\bar{\zeta}) &=&\int_{0}^{r}{-\dot{g}_i(t)\mathrm{sign}(g_i(t))dt}-|\langle\bar{\zeta},b_i\rangle|\\
&=& -\int_{0}^{t_1}{\dot{g}_i(t)\,\mathrm{sign}(g_i(t))dt} - \sum_{j=1}^{k-1}\int_{t_j}^{t_{j+1}}{\dot{g}_i(t)\,\mathrm{sign}(g_i(t))dt} \\
&&-\int_{t_k}^{r}{\dot{g}_i(t)\,\mathrm{sign}(g_i(t))dt}-|\langle\bar{\zeta},b_i\rangle|\\
&=& \left(g_i(0) - g_i(t_1) \right)\mathrm{sign}\left(g_i\left(\frac{t_1}{2}\right)\right) +\sum_{j=1}^{k-1} \left(g_i(t_j) - g_i(t_{j+1}) \right)
\mathrm{sign}\left(g_i\left(\frac{t_j+t_{j+1}}{2}\right)\right)\\
&& + \left(g_i(t_k) - g_i(r) \right)\mathrm{sign}\left(g_i\left(\frac{t_k+r}{2}\right)\right)-|\langle\bar{\zeta},b_i\rangle|\\
&=&g_i(0)\mathrm{sign}\left(g_i\left(\frac{t_1}{2}\right)\right) -g_i(r)\mathrm{sign}\left(g_i\left(\frac{t_k+r}{2}\right)\right)-g_i(0)\mathrm{sign}(g_i(0)).
\end{eqnarray*}
If $g_i(0) \ne 0$ and $g_i(r) \ne 0$, then $\mathrm{sign}(g_i(0))=  \mathrm{sign}\left(g_i\left(\frac{t_1}{2}\right)\right)$ and
$\mathrm{sign}(g_i(r)) = 
\mathrm{sign}\left(g_i\left(\frac{t_k+r}{2}\right)\right)$. Thus
$h_i(\bar{x},\bar{\zeta}) = -|g_i(r)| =-|\langle\bar{\zeta},e^{-Ar}b_i\rangle|$.
Analogously, if $g_i(0) = 0$ and $g_i(r) = 0$, then $h_i(\bar{x},\bar{\zeta}) = 0 = -|g_i(r)|$. If $g_i(0) \ne 0$ and $g_i(r) = 0$,
then $h_i(\bar{x},\bar{\zeta}) = -|g_i(r)|$. Finally, if $g_i(0) = 0$ and $g_i(r) \ne 0$, then $h_i(\bar{x},\bar{\zeta}) = -|g_i(r)|.$
In all cases, we have
$$h_i(\bar{x},\bar{\zeta}) = -|g_i(r)| =-|\langle\bar{\zeta},e^{-Ar}b_i\rangle|,$$
and (\ref{compham}) follows.
\end{proof}
\begin{Rem}\label{Snonlip}
The characterization \eqref{NewS}, thanks to Lemma \ref{lemmaham} and Theorem \ref{th:PMPlin}, implies also that
\[
\mathcal{S}=\{x \in \R^N: \exists r>0\,\text{ such that }\,x\in \mathrm{bdry}\mathcal{R}_r\,\text{ and }\,\zeta \in \mathbb{S}^{N-1} 
\cap N_{\mathcal{R}_r}(x)\,\text{ for which }\,h(x,\zeta) = 0\}. 
\] 
\end{Rem}
The computation of the Hamiltonian contained in \eqref{compham} permits to prove the following characterization of
non-Lispchitz points of $T$. We recall that, under the assumption \eqref{condk}, the set $\mathcal{S}$ is nonempty.
\begin{Theorem}\label{charnonlip} 
Let $\bar{x} \in \R^N \setminus \{ 0\}$. Then $T$ is non-Lipschitz at $\bar{x}$ if and only if $\bar{x}\in \mathcal{S}$. Moreover, 
$\mathcal{S}$ is invariant for optimal trajectories of the reversed dynamics having vanishing Hamiltonian.
\end{Theorem}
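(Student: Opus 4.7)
The plan is to combine the nonsmooth-analysis characterization from Section \ref{sect:ham} with the explicit Hamiltonian formula from Lemma \ref{lemmaham}, and then read off the invariance from the constancy of the minimized Hamiltonian along optimal trajectories.

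\medskip

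\emph{Equivalence of non-Lipschitzianity with $\bar{x}\in\mathcal{S}$.} I would work from the reformulation \eqref{NewS} and Remark \ref{Snonlip}. For the \textquotedblleft only if\textquotedblright\ direction, if $T$ is non-Lipschitz at $\bar{x}$, Proposition \ref{Nonlip} produces a nonzero $\zeta$ with $(\zeta,0)\in N_{\mathrm{epi}(T)}(\bar{x},T(\bar{x}))$, i.e.\ $\zeta\in\partial^\infty T(\bar{x})$. Proposition \ref{Hamzero} then gives $\zeta\in N_{\mathcal{R}_{T(\bar{x})}}(\bar{x})$ and $h(\bar{x},\zeta)=0$; after normalization I may assume $\zeta\in\mathbb{S}^{N-1}$. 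The Maximum Principle (Theorem \ref{th:PMPlin}) identifies the optimal reversed-dynamics control steering $0$ to $\bar{x}$ in time $r:=T(\bar{x})$ as $u_i(t)=\mathrm{sign}\langle\zeta,e^{-At}b_i\rangle$, whence $\bar{x}$ admits the integral representation in \eqref{NewS}. Plugging this representation into Lemma \ref{lemmaham} yields $0=h(\bar{x},\zeta)=-\sum_{i=1}^M|\langle\zeta,e^{-Ar}b_i\rangle|$, so $\langle\zeta,e^{-Ar}b_i\rangle=0$ for every $i$, which is precisely the remaining condition in \eqref{NewS}. Hence $\bar{x}\in\mathcal{S}$.

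\medskip

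\emph{Converse.} Given $\bar{x}\in\mathcal{S}$, pick $r>0$ and $\zeta\in\mathbb{S}^{N-1}$ witnessing \eqref{NewS}. Because $\zeta$ is a nonzero normal to the convex sublevel $\mathcal{R}_r$, the point $\bar{x}$ lies on $\mathrm{bdry}\,\mathcal{R}_r$, so $T(\bar{x})=r$ by the equivalence $(\mathrm{i})\Leftrightarrow(\mathrm{ii})$ of Theorem \ref{th:PMPlin}. Lemma \ref{lemmaham} combined with the vanishing boundary conditions $\langle\zeta,e^{-Ar}b_i\rangle=0$ gives $h(\bar{x},\zeta)=0$. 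Proposition \ref{generalhamilt}(b) then places $(\zeta,0)\in N_{\mathrm{epi}(T)}(\bar{x},T(\bar{x}))$, which by Proposition \ref{Nonlip} is precisely non-Lipschitzianity of $T$ at $\bar{x}$.

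\medskip

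\emph{Invariance.} Let $\bar{x}\in\mathcal{S}$, with $r$ and $\zeta$ as above, and let $\bar{y}(\cdot)$ be the corresponding optimal trajectory of the reversed dynamics, so $\bar{y}(0)=0$, $\bar{y}(r)=\bar{x}$. The associated adjoint in the linear case is $\lambda(t)=(e^{-A(t-r)})^{\mathsf T}\zeta$, never zero, and satisfies $\lambda(t)\in N_{\mathcal{R}_t}(\bar{y}(t))$ by the standard linear PMP together with the dynamic programming observation $\bar{y}(t)\in\mathrm{bdry}\,\mathcal{R}_t$, $T(\bar{y}(t))=t$, recalled in Section \ref{sec:prelim}. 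Since $h$ is constant along optimal pairs (property (iii) in Theorem \ref{th:PMPnonlin}, specialised to the linear setting), $h(\bar{y}(t),\lambda(t))=h(\bar{x},\zeta)=0$ for every $t\in(0,r]$. The characterisation in Remark \ref{Snonlip} then places each $\bar{y}(t)$ in $\mathcal{S}$, proving invariance.

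\medskip

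The only delicate point I foresee is bookkeeping of the signs and time-reversals so that Lemma \ref{lemmaham} applies verbatim to the representation of $\bar{x}$ coming from PMP; once the representations \eqref{defS} and \eqref{NewS} are identified via Remark \ref{remnewS}, everything else is routine combination of the Section \ref{sect:ham} machinery with the linear-PMP formulas.
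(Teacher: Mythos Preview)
Your proposal is correct and follows essentially the same approach as the paper's own proof: both directions of the equivalence are obtained by combining Propositions \ref{Nonlip} and \ref{Hamzero} with the explicit Hamiltonian computation of Lemma \ref{lemmaham} and the PMP representation of $\bar{x}$, and the invariance is read off from the constancy of $h$ along the optimal pair together with the characterisation in Remark \ref{Snonlip}. Your invariance paragraph is in fact more detailed than the paper's, which dispatches the statement in two lines via Remark \ref{remnewS}; the only cosmetic issue is the exact sign/exponent in your adjoint formula $\lambda(t)=(e^{-A(t-r)})^{\mathsf T}\zeta$, which you already flag as bookkeeping and which does not affect the argument.
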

\begin{proof}
Let $\bar{x}\ne 0$ be a non-Lipschitz point of $T$ and set $r = T(\bar{x}) >0$. We recall that by Theorem 3.7 in \cite{CMW} epi($T$) 
has positive reach. Therefore, by Propositions \ref{Nonlip} and \ref{Hamzero}, there exists  $\bar{\zeta} \in \mathbb{S}^{N-1} \cap N_{\mathcal{R}_r}(\bar{x})$ 
such that $h(\bar{x},\bar{\zeta})=0$.
 
Let $\bar{u}(\cdot)$ be the optimal control steering $\bar{x}$ to the origin in the minimum time $r$. 
Then $\tilde{u}(t) = \bar{u}(r-t), 0\le t\le r$,
steers the origin to $\bar{x}$ in the optimal time $r$ for the reversed dynamics $\dot{x} = -Ax - Bu, u\in [-1,1]^M$, namely
$$ \bar{x} = -\int_{0}^{r} {e^{-A(r-t)}B\tilde{u}(t)dt} =-\sum_{i=1}^M\int_{0}^{r} {e^{-A(r-t)}b_i\tilde{u}_i(t)dt},  $$
where $b_i$ are the columns of $B$ and $\tilde{u}=(\tilde{u}_1,\dots,\tilde{u}_M)$.\\
By the Maximum Principle,
$$\tilde{u}_i(t) = -\mathrm{sign}\left( \langle \bar{\zeta},e^{-A(r-t)}b_i\rangle\right),\,\,\,i=1,\dots,M.
$$
Therefore,
\begin{eqnarray}\label{4.11}
\bar{x}&=&\sum_{i=1}^M\int_{0}^{r} {e^{-A(r-t)}b_i\,\mathrm{sign}\left( \langle \bar{\zeta},e^{-A(r-t)}b_i\rangle\right)dt} \nonumber\\
&=& \sum_{i=1}^M\int_{0}^{r} {e^{-At}b_i\,\mathrm{sign}\left( \langle \bar{\zeta},e^{-At}b_i\rangle\right)dt}.
\end{eqnarray}
Since $ h(\bar{x},\bar{\zeta}) = 0$, (\ref{compham}) yields
\[
\langle\bar{\zeta}, e^{-Ar}b_i\rangle = 0,\,\,\,\,i = 1,\dots,M,
\]
and the proof of $\bar{x} \in \mathcal{S}$ is concluded, recalling \eqref{4.11} and \eqref{NewS}.
   
Conversely, let $\bar{x} \in \mathcal{S}$ and set $r = T(\bar{x})>0$. Then by \eqref{NewS} there exists $\bar{\zeta} \in \mathbb{S}^{N-1} 
\cap N_{\mathcal{R}_r}(\bar{x})$ 
such that $\langle\bar{\zeta},e^{-Ar}b_i\rangle = 0$ for all $i = 1,\dots,M$, and
$$
\bar{x} = \sum_{i=1}^M\int_{0}^{r}{e^{-At}b_i\,\mathrm{sign}\left(\langle\zeta,e^{-At}b_i\rangle\right)dt}.
$$
Recalling (\ref{compham}), $h(\bar{x},\bar{\zeta)}) = 0$. Therefore, by Propositions \ref{Nonlip} and \ref{Hamzero},
$T$ is non-Lipschitz at $\bar{x}$.

The last statement is an immediate consequence of Remark \ref{remnewS}. In fact, the alternative espression of $\mathcal{S}$ given in \eqref{NewS},
together with the Maximum Principle (see Theorem \ref{th:PMPlin})
shows that every $x\in\mathcal{S}$ is the endpoint of a time optimal trajectory for the reversed dynamics with vanishing Hamiltonian, starting from the origin.
The proof is concluded.
\end{proof} 
\noindent We prove now a rectifiability property for $\mathcal{S}$, which is the main result of this section.
\begin{Theorem}\label{N-1rect}
Let $\mathcal{S}$ be defined according to (\ref{defS}) and let $k$ be given by (\ref{rkB}).
Then $\mathcal{S}$ is closed and countably $(N-k)$-rectifiable. More precisely, for every $r>0$ there exist countably many Lipschitz functions
$f_j:\mathbb{R}^{N-k-1}\to \mathbb{R}^N$ such that $\mathcal{S}\cap\mathrm{bdry}\, \mathcal{R}_r\subseteq \cup_j f_j(\mathbb{R}^{N-k-1})$.
\end{Theorem}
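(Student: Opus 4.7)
The plan is to parametrize $\mathcal{S}\cap\mathrm{bdry}\,\mathcal{R}_r$ by the adjoint variable $\zeta$ through the map
\[
\Phi_r(\zeta) \;=\; \sum_{i=1}^M \int_{0}^{r} e^{A(t-r)} b_i\,\mathrm{sign}(\langle \zeta, e^{At}b_i\rangle)\,dt,
\]
defined on $V\cap\mathbb{S}^{N-1}$, where $V=\{\zeta\in\mathbb{R}^N:\langle\zeta,b_i\rangle=0,\ i=1,\dots,M\}$ is the $(N-k)$-dimensional subspace orthogonal to the columns of $B$ (dimension $N-k$ because $\mathrm{rank}\,B=k$). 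By the very definition \eqref{defS}, every $x\in\mathcal{S}\cap\mathrm{bdry}\,\mathcal{R}_r$ lies in the image of $\Phi_r$, and $V\cap\mathbb{S}^{N-1}$ is covered by finitely many bi-Lipschitz charts of open subsets of $\mathbb{R}^{N-k-1}$, so the task reduces to covering $\Phi_r$, read through these charts, by a countable family of Lipschitz functions.

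The obstruction to global Lipschitz regularity is the discontinuous $\mathrm{sign}$ in the integrand: if two zeros of the switching function $g_{\zeta,i}(t):=\langle\zeta,e^{At}b_i\rangle$ merge as $\zeta$ varies, the integral is only H\"older-continuous with exponent $1/(\text{multiplicity})$. I would resolve this by a quantitative stratification of $V\cap\mathbb{S}^{N-1}$ into countably many pieces $\mathcal{T}_{\nu,m}$, indexed by $\nu=(n_1,\dots,n_M)\in\mathbb{N}^M$ and $m\in\mathbb{N}$, consisting of those $\zeta$ for which each $g_{\zeta,i}$ has exactly $n_i$ sign-change zeros in $(0,r)$, consecutive such zeros (including distance from $\{0,r\}$) are separated by at least $1/m$, and $|g'_{\zeta,i}|\ge 1/m$ at every sign-change zero. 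The normality condition \eqref{kalman} forces $g_{\zeta,i}\not\equiv 0$ for every $\zeta\in V\cap\mathbb{S}^{N-1}$, so $g_{\zeta,i}$ is analytic with finitely many zeros in $[0,r]$, whence every $\zeta$ lies in some $\mathcal{T}_{\nu,m}$.

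On each $\mathcal{T}_{\nu,m}$, the implicit function theorem applied to $g_{\zeta,i}(t)=0$ represents each sign-change zero as a smooth map $\zeta\mapsto t_{i,j}(\zeta)$ with $|\nabla_\zeta t_{i,j}|\le Cm$: the numerator $\nabla_\zeta g_{\zeta,i}(t)=e^{A^T t}b_i$ is uniformly bounded on $[0,r]$, while $|g'_{\zeta,i}|\ge 1/m$. Writing
\[
\Phi_r(\zeta) \;=\; \sum_{i=1}^M \sum_{j=0}^{n_i}(-1)^{\varepsilon_{i,j}}\int_{t_{i,j}(\zeta)}^{t_{i,j+1}(\zeta)} e^{A(t-r)}b_i\,dt
\]
(with $t_{i,0}=0$, $t_{i,n_i+1}=r$, and $\varepsilon_{i,j}$ locally constant on $\mathcal{T}_{\nu,m}$) one reads off that $\Phi_r|_{\mathcal{T}_{\nu,m}}$ is Lipschitz with constant depending only on $r,\nu,m$. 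Pulling back along the charts of $V\cap\mathbb{S}^{N-1}$ and applying McShane's extension theorem then produces the required countable family $f_j:\mathbb{R}^{N-k-1}\to\mathbb{R}^N$.

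Closedness of $\mathcal{S}$ follows from the continuity of $(r,\zeta)\mapsto\Phi_r(\zeta)$ on $(0,\infty)\times V$: for $(r_n,\zeta_n)\to(r,\zeta)$ with $\zeta\neq 0$, $\mathrm{sign}(g_{\zeta_n,i})\to\mathrm{sign}(g_{\zeta,i})$ pointwise off the finite zero set of $g_{\zeta,i}$, so dominated convergence applies; combined with the continuity of $T$, a convergent sequence in $\mathcal{S}$ has limit in $\mathcal{S}$. The countable $(N-k)$-rectifiability of $\mathcal{S}$ is obtained by running exactly the same stratification argument on the parameter space $(0,\infty)\times(V\cap\mathbb{S}^{N-1})$, which has dimension $N-k$. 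The hard part of the whole argument is the quantitative lower bound $|g'_{\zeta,i}|\ge 1/m$ built into the stratification: without it the Lipschitz constants on strata blow up at zero-mergers, and one would obtain only H\"older (not Lipschitz) parametrizations.
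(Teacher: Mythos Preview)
Your overall strategy matches the paper's: parametrize $\mathcal{S}\cap\mathrm{bdry}\,\mathcal{R}_r$ by $\zeta\in Z:=V\cap\mathbb{S}^{N-1}$, stratify $Z$ into countably many pieces, and show that the restriction of $\Phi_r$ to each piece is Lipschitz via the implicit function theorem. The gap is in the stratification itself. Your pieces $\mathcal{T}_{\nu,m}$ impose $|g'_{\zeta,i}|\ge 1/m$ at every sign-change zero, and you assert that every $\zeta\in Z$ lies in some $\mathcal{T}_{\nu,m}$. But a sign-change zero of $g_{\zeta,i}$ is precisely a zero of \emph{odd} multiplicity, and whenever that multiplicity is at least $3$ the first derivative vanishes there; such $\zeta$ belong to no $\mathcal{T}_{\nu,m}$. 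Normality bounds the multiplicity of any zero only by $N-1$, so for $N\ge 4$ triple (and higher odd) zeros do occur for suitable $\zeta$, and your strata fail to cover $Z$. Your justification (``$g_{\zeta,i}$ is analytic with finitely many zeros'') establishes only finiteness of the zero set, not simplicity of the sign-change zeros. The comment at the end about ``zero-mergers'' concerns a different phenomenon (two simple zeros collapsing into one of even multiplicity) and does not address this case.

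The paper repairs exactly this point with a finer stratification: $Z$ is decomposed according to the full multiplicity vector $\mathbf{j}=(j_1,\dots,j_{N-1})$, where $j_\ell$ counts zeros of multiplicity exactly $\ell$ in $[0,\tau]$, together with a minimal-separation parameter $d$ and the sign of $g^+$ near $0^+$. On each piece, a zero of multiplicity $m_h$ is tracked by applying the implicit function theorem not to $g^+$ itself but to $\partial_t^{m_h-1}g^+$, whose $t$-derivative $\partial_t^{m_h}g^+$ is nonzero by definition of the multiplicity; one then argues that, within the fixed-multiplicity stratum, the resulting implicit function $\varphi_h$ actually locates the zero of $g^+$. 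The Lipschitz estimate for $\zeta\mapsto\mathrm{sign}(g^+(\cdot,\zeta))$ in $L^1(0,\tau)$ follows because only zeros with $m_h$ odd contribute a sign change, and each of those moves Lipschitz-continuously with $\zeta$. Your argument is precisely the special case $\mathbf{j}=(j_1,0,\dots,0)$ of this scheme; to complete it you must either carry out the analogous analysis for every multiplicity pattern, or else show separately that the image under $\Phi_r$ of the exceptional set $\{\zeta:\text{some sign-change zero has multiplicity}\ge 3\}$ is already $(N-k-1)$-rectifiable, which is not automatic since $\Phi_r$ is only H\"older there.
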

\begin{proof}
The characterization of $\mathcal{S}$ contained in Remark \ref{Snonlip} implies immediately its closedness.

We now deal with the rectifiability of $\mathcal{S}$.
Observe that the set
\begin{equation}\label{defzeta}
Z := \{\zeta \in \mathbb{S}^{N-1}: \langle \zeta,b_i\rangle = 0,\,\forall i=1,\dots,M\}
\end{equation}
is a $N-(k+1)$ manifold. We define, for $i=1,\dots,M$,
\[
g^+_i(t,\zeta) = \langle \zeta,e^{At}b_i\rangle,\qquad\zeta \in Z,\; t\ge 0,
\]
\[ 
\Phi_i(r,\zeta) = \int_{0}^{r}{e^{A(t-r)}b_i\,\mathrm{sign}(g^+_i(t,\zeta))dt}
\]
and
\begin{equation}\label{defSM}
\Sigma_i = \big\{x\in \R^N: \text{ there exist }r >0\text{ and }\zeta \in Z\text{ such that } x = \Phi_i(r,\zeta)\big\}.
\end{equation}
We claim now that 
\begin{equation} \label{claimsigma}
\text{each } \Sigma_i \text{ is contained in a countable union of Lipschitz graphs of $N-k$ variables.}
\end{equation}
To this aim, we fix the index $i$ and drop the corresponding subscript for the sake of simplicity.\\
We set the following definitions. Fix $\tau >0$. For every $(N-1)$-tuple of nonnegative integers,
\[
\mathbf{j} = (j_1,\dots,j_{N-1}) \in \mathbb{N}^{N-1},
\]
we define
\begin{eqnarray*}
Z_{\mathbf{j}} = \Big\{\zeta\in Z: &&\text{$g^+(t,\zeta)$ has in the interval $[0,\tau]$ exactly}\\
&&\qquad\text{$j_1$ zeros of multiplicity}\,\,1,\\ 
&&\qquad \dots\\
&&\qquad\text{$j_{N-1}$ zeros of multiplicity}\,\,N-1\Big\}.
\end{eqnarray*}
We set also $|\mathbf{j}| = j_1 + \dots + j_{N-1}$ and observe that, thanks to \eqref{defzeta}, we can consider only $\mathbf{j}'s$ such that
$|\mathbf{j}| \ge 1$. Moreover, for any positive integer $d$ and $\mathbf{j}\in\mathbb{N}^{N-1}$ with $|\mathbf{j}| > 1$ we define
\[
Z^d_{\mathbf{j}} = \left\{\zeta\in Z_{\mathbf{j}}: \min \{|\tau_1 - \tau_2|: g^+(\tau_1,\zeta) = g^+(\tau_2,\zeta) = 0, \tau_1 \ne \tau_2\}\ge \frac{1}{d} \right\}. 
\]
Invoking Lemma 3.2 in \cite{GK}, we obtain that
\[
Z =\Big(\bigcup_{|\mathbf{j}|=1}Z_{\mathbf{j}}\Big)\; \cup\; \Bigg(\bigcup_{d=1}^{\infty}
\bigcup_{\scriptstyle \begin{array}{c}\mathbf{j}\in \mathbb{N}^{N-1}\\ |\mathbf{j}|>1\end{array}}
         Z^d_{\mathbf{j}}\Bigg).
\]
We define finally the map
\begin{equation}  \label{defY}
\begin{split}
Y: &\; Z\rightarrow L^1(0,\tau)\\
&\; \zeta \mapsto \mathrm{sign}(g^+(\cdot,\zeta)),
\end{split}
\end{equation}
and, for all $\mathbf{j}\in \mathbb{N}^{N-1}$, the sets
\[
Z_{\mathbf{j}}^{d,\pm}=\Big\{ \zeta \in Z_{\mathbf{j}}^d: \lim_{t\to 0+}\mathrm{sign}(g^+(t,\zeta)) = \pm 1 \Big\}.
\]
We fix now $\mathbf{j} \in \mathbb{N}^{N-1}$.
If $|\mathbf{j}|=1$, then $Y(\zeta)(t) \equiv \pm 1$ for all $\zeta \in Z_{\mathbf{j}}$, $t\in (0,\tau]$, and so $Y$ is locally Lipschitz in $Z_{\mathbf{j}}$.
We claim that $Y$ is locally Lipschitz also in $Z_{\mathbf{j}}^{d,+}$ and in $Z_{\mathbf{j}}^{d,-}$ for each $\mathbf{j} \in \mathbb{N}^{N-1}$. 
The argument for $Z_{\mathbf{j}}^{d,+}$ and $Z_{\mathbf{j}}^{d,-}$ is the same, so we perform it only for $Z_{\mathbf{j}}^{d,+}$, $|\mathbf{j}| >1$, $d\ge 1$.

So, fix $|\mathbf{j}| >1$, $d\ge 1$ and $\zeta_{0} \in Z_{\mathbf{j}}^{d,+}$. Let $t_1,\dots,t_{|\mathbf{j}|}$ be the zeros of 
$g^+(\cdot,\zeta_{0})$ in $[0,\tau]$, each one with multiplicity $m_h$, $h= 1,\dots, |\mathbf{j}|$.\\
By continuity and the implicit function theorem, for each $h= 1,\dots, |\mathbf{j}|$ there exist a compact neighborhood $V_h$ of $\zeta_0$,
a neighborhood $I_h$ of $t_h$ and a $\mathcal{C}^1$-function $\varphi_h: V_h \rightarrow I_h$ such that 
\begin{equation}\label{dernonzero}
\frac{\partial^{m_h}}{\partial t^{m_h}}g^+(t,\zeta)\neq 0\qquad\forall (t,\zeta)\in I_h\times V_h 
\end{equation}
and
\begin{equation}\label{intIi}
\left\{ (\zeta,t) \in V_h\times I_h: \frac{\partial^{m_h-1}}{\partial t^{m_h-1}}g^+(t,\zeta) = 0 \right\} = \mathrm{graph}(\varphi_h). 
\end{equation}
The neighborhoods $I_h$ can be taken disjoint and satisfying $|I_h| \le \frac{1}{2d}$.
We choose now $V=V(\zeta_0)\subseteq \cap_{h=1}^{|\mathbf{j}|}V_h$ with the further requirement that  for all $\zeta \in V$,
the set $\{t\in [0,\tau]: g^+(t,\zeta) = 0\}$ is contained in $\bigcup_{h=1}^{|\mathbf{j}|} I_h$.
Since $|I_h| \le \frac{1}{2d}$, the function $g^+(t,\zeta)$ has at most one zero in each $I_h$.

Set $V_{\mathbf{j}}(\zeta_0) = V\cap Z_{\mathbf{j}}^{d,+}$. Without loss of generality, we may assume that $Z_{\mathbf{j}}^{d,+}$ 
is contained in a finite union of such $V_{\mathbf{j}}(\cdot)$, say $Z_{\mathbf{j}}^{d,+} = \bigcup_\ell V_{\mathbf{j}}(\zeta_\ell)$. 
We write the functions corresponding to $V_{\mathbf{j}}(\zeta_\ell)$ as $\varphi_{h}^{\ell}(\zeta), h = 1,\dots, |\mathbf{j}|$, and observe
that each $\varphi_{h}^{\ell}$ is Lipschitz continuous on $V_{\mathbf{j}}(\zeta_\ell)$, say with Lipschitz constant $L_{h}^{\ell}$.
We denote also the intervals corresponding to $V_{\mathbf{j}}(\zeta_\ell)$ as $I_h^\ell$, $h=1,\ldots,|\mathbf{j}|$. Of course, some of the
$V_{\mathbf{j}}(\zeta_\ell)$'s may be the singleton $\{\zeta_\ell\}$, and in this case everything trivializes. Fix now an index $\ell$.

We claim that, 
for each $\zeta \in V_{\mathbf{j}}(\zeta_\ell)$, $g^+(\cdot,\zeta)$ has a zero of multiplicity $m_h$ exactly at 
$\varphi_{h}^{\ell}(\zeta)$, $h = 1,\dots, |\mathbf{j}|$, and does not have other zeros in $[0,\tau]$. Indeed, by construction
for each $\zeta \in V_{\mathbf{j}}(\zeta_\ell)$ all zeros of $g^+(\cdot,\zeta)$ are contained in $\bigcup_{h=1}^{|\mathbf{j}|} I_h^\ell$.
Let $\kappa$ be the largest index $k$ such that $j_k\neq 0$. Again by construction, for each $\zeta\in V_{\mathbf{j}}(\zeta_\ell)$
the map $t\mapsto g^+(t,\zeta)$ has exactly $j_{\kappa}$ zeros of multiplicity $\kappa$. Moreover, such $j_{\kappa}$ zeros must belong
to the same intervals $I_h^\ell$ to which the 
$j_{\kappa}$ zeros of multiplicity $\kappa$ of $g^+(\cdot,\zeta_\ell)$ belong, since in all other intervals we have
at least one nonvanishing derivative of order $\le\kappa -1$. Owing to \eqref{intIi} with $m_h=\kappa$, such zeros must occur at $\varphi_h^{\ell}(\zeta)$,
for the corresponding index $h$.
Let now $\kappa_1$ be the largest positive integer $<\kappa$ such that $j_{\kappa_1}>0$. By definition of $V_{\mathbf{j}}(\zeta_\ell)$, for each
$\zeta\in V_{\mathbf{j}}(\zeta_\ell)$ the map $t\mapsto g^+(\cdot,\zeta)$ does not have zeros of order $k$, with $\kappa_1 < k <\kappa$ and must have
exactly $j_{\kappa_1}>0$ zeros of multiplicity $\kappa_1$. Such zeros cannot belong to the intervals to which the $\kappa$-zeros
of $g^+(\cdot,\zeta)$ belong, since such intervals already contain a zero; on the other hand, by \eqref{dernonzero} they must
belong to the same intervals $I_h^\ell$ to which the zeros of multiplicity $\kappa_1$ of $g^+(\cdot,\zeta_\ell)$ belong,
and therefore they must occur at $\varphi_h^{\ell}(\zeta)$, for the corresponding index $h$. An analogous argument can be performed for all further
indexes $k < \kappa_1$ such that $j_k\neq 0$. Therefore the claim is proved.

We are now ready to show that $Y$ is Lipschitz on $V_{\mathbf{j}}(\zeta_\ell)$.
Indeed, fix the index $\ell$ and let $\zeta_1,\zeta_2 \in V_{\mathbf{j}}(\zeta_\ell)$.
Then
\vspace{-0.283truecm}
\begin{align*}
\|Y(\zeta_2) - Y(\zeta_1) \|_{L^1(0,\tau)} &\; \le\;  2 \sum_{\scriptscriptstyle\begin{array}{c}h=1\\ m_h\text{ is odd}\end{array}}^{|\mathbf{j}|}|\varphi_h^\ell(\zeta_2)-\varphi_h^\ell(\zeta_1)|\\
&\;\le\; 2 \sum_{\scriptscriptstyle\begin{array}{c}h=1\\ m_h\text{ is odd}\end{array}}^{|\mathbf{j}|}L_h^\ell\|\zeta_2-\zeta_1\|,
\end{align*}
which proves the claim.

The Lipschitz continuity of $Y$ on each $V_{\mathbf{j}}(\zeta_\ell)$ implies immediately that, for all fixed $r\in [0,\tau]$, 
the function $\zeta \mapsto \Phi(r,\zeta)$ is Lipschitz in the same set.
On the other hand, the function $r \mapsto \Phi(r,\zeta)$ is immediately seen to be Lipschitz on $[0,\tau]$.
Consequently, the set $\Sigma$ defined in (\ref{defSM}) is contained in a countable union of Lipschitz graphs of $N-k$ variables.
The $(N-k)$-rectifiability of $\mathcal{S}$ now follows easily and the proof is concluded.
\end{proof}

\section{Nonlinear systems in $\R^2$}\label{sec:nonlin}
This section is devoted to the study of non-Lipschitz points of $T$ for the nonlinear system
\begin{equation} \label{nonlinsys}
\begin{cases}
\dot{x}(t)=F(x(t)) + G(x(t)) u(t),\\
u(t) \in [-1,1]^M,\\
x(0)=x,
\end{cases}
\end{equation}
where the state $x$ is in $\R^2$ and $M$ is either 1 or 2.

The assumptions are the following: 
\begin{itemize}
\item[1)] $F:\R^2 \to \R^2$ and $G:\R^2 \to \mathbb{M}_{2\times M}$ are of class $\mathcal{C}^{1,1}$ and all partial derivatives are Lipschitz with constant $L$;
\item[2)] $F(0)=0$;
\item[3)] $\mathrm{rank}[G_i(0),DF(0)G_i(0)]=2$\, for\, $i=1,\ldots ,M$, where we mean $G=G_1$ if $M=1$ and $G = (G_1,G_2)$ if $M=2$;
\item[4)] $DG(0)=0$.
\end{itemize}
Theorems 5.1, 6.2 and 6.5 in \cite{GK} yield that there exists $\mathcal{T} >0$, depending only on $L$, $DF(0)$, and $G(0)$,
such that for all $0< \tau <\mathcal{T}$,
\begin{itemize}
\item[a)] $\mathcal{R}_\tau$ is strictly convex and for all $x\in \mathrm{bdry}\mathcal{R}_{\tau}$ there exists a unique
optimal control $u(\cdot)$ steering $x$ 
to the origin in the minimum time $\tau$, and $u(\cdot)$ is bang-bang with finitely many switchings,
\item[b)] every $x\in \mathcal{R}_{\tau}$ is optimal (the definition of optimal point was recalled in Section \ref{sect:normals}),
\item[c)] epi($T$) has locally positive reach.
\end{itemize}
We prove here a result which is the nonlinear two dimensional analogue of Theorem \ref{N-1rect}. Fix $0<\tau < \mathcal{T}$ and define
\begin{equation}\label{defSnonlin}
S = \left\{ x\in \mathcal{R}_{\tau}: \exists \zeta \in \mathbb{S}^1 \cap N_{\mathcal{R}_{T(x)}}(x)\,\text{ such that }\,h(x,\zeta) = 0\right\}.
\end{equation}
Recalling Propositions \ref{Nonlip} and \ref{Hamzero}, $S$ is exactly the set of non-Lipschitz points of $T$ within $\mathcal{R}_{\tau}$.
We show first that $S$ is invariant for a class of optimal trajectories and then that it is countably $\mathcal{H}^1$-rectifiable.
\begin{Proposition}\label{Sinv}
Let $S$ be defined according to \eqref{defSnonlin} and let $F$, $G$ satisfy the assumptions 1) -- 4). Then
$S$ is invariant for optimal trajectories.
\end{Proposition}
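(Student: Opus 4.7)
The plan is to transport the pair ``proximal normal to the sublevel plus vanishing Hamiltonian'' from $\bar x\in S$ to every other point of an optimal trajectory through $\bar x$, using the adjoint arc supplied by Pontryagin's Maximum Principle together with the constancy of the minimized Hamiltonian along it.

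Concretely, I would fix $\bar x\in S$ and a witnessing $\bar\zeta\in\mathbb{S}^1\cap N_{\mathcal{R}_{T(\bar x)}}(\bar x)$ with $h(\bar x,\bar\zeta)=0$. By assumption a) there is a unique optimal control $\bar u(\cdot)$ steering $\bar x$ to the origin in the minimum time $T(\bar x)$; let $y(\cdot)$ be the corresponding trajectory and set $\bar y(t):=y(T(\bar x)-t)$. Invoking Theorem \ref{th:PMPnonlin} at $\bar x$ produces an absolutely continuous, never vanishing adjoint arc $\lambda(\cdot):[0,T(\bar x)]\to\R^2$ with $\lambda(T(\bar x))=\bar\zeta$ (legitimate because the strict convexity of $\mathcal{R}_{T(\bar x)}$ makes the Clarke and proximal normal cones agree) and with the crucial constancy property
\[
h\bigl(\bar y(t),\lambda(t)\bigr)=h(\bar x,\bar\zeta)=0\qquad\text{for every }t\in[0,T(\bar x)].
\]
For any $s\in(0,T(\bar x))$, set $x_s:=y(s)=\bar y(T(\bar x)-s)$: by the dynamic programming principle $T(x_s)=T(\bar x)-s$, the restriction $\bar u|_{[s,T(\bar x)]}$ is the unique optimal control from $x_s$, and by the standard adjoint-versus-reachable-set relation (Corollary 4.8 in \cite{fra}, already used in the proof of Proposition \ref{generalhamilt}) one has $\lambda(T(x_s))\in N^C_{\mathcal{R}_{T(x_s)}}(x_s)$. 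Strict convexity of $\mathcal{R}_{T(x_s)}$ upgrades this to $\lambda(T(x_s))\in N_{\mathcal{R}_{T(x_s)}}(x_s)$, and the displayed constancy yields $h\bigl(x_s,\lambda(T(x_s))\bigr)=0$, so, after normalization, $x_s\in S$. To cover the portion of the optimal trajectory lying on the opposite side of $\bar x$ (towards larger $T$), I would use assumption b) to extend $\bar u$, $y$ and $\lambda$ to an optimal pair based at some $x_1\in\mathcal{R}_\tau$ with $T(x_1)>T(\bar x)$, and rerun the same argument on the enlarged time interval.

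The main obstacle I expect is the promotion of the Clarke normal delivered by the Maximum Principle to a proximal normal, as required by the definition \eqref{defSnonlin} of $S$. Strict convexity of $\mathcal{R}_{T(x_s)}$, combined with its local positive reach from item a), forces the three nonsmooth normal cones to coincide at every boundary point, so the upgrade is automatic; this is exactly the mechanism already exploited in the proof of Proposition \ref{generalhamilt}, to which one can appeal directly if a hands-on verification turns out to be cumbersome.
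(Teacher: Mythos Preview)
Your overall strategy---transporting the pair ``normal plus vanishing Hamiltonian'' along the optimal trajectory via the adjoint arc---is correct and is what the paper does. But the step where you produce that adjoint arc has a real gap. Theorem~\ref{th:PMPnonlin} is an \emph{existence} statement: it furnishes \emph{some} adjoint $\lambda$ with $\lambda(T(\bar x))\in N^C_{\mathcal R_{T(\bar x)}}(\bar x)$; it does not let you prescribe $\lambda(T(\bar x))=\bar\zeta$. Your parenthetical about strict convexity only ensures that $\bar\zeta$ lies in the Clarke cone, not that the PMP outputs it. When the normal cone at $\bar x$ is a genuine two-dimensional wedge, the PMP adjoint need not be proportional to $\bar\zeta$, and for the adjoint you actually want---the solution of the adjoint ODE with terminal value $\bar\zeta$---you have no maximization property, hence neither the constancy of $h$ along it nor the Frankowska inclusion $\lambda(t)\in N^C_{\mathcal R_t}$ is available.

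The paper spends most of its proof on exactly this point: it writes $\bar\zeta$ as a combination of exposed rays of $N_{\mathcal R_{T(\bar x)}}(\bar x)$, approximates each exposed direction by gradients $DT(x_n)$ at nearby differentiability points (where the normal cone \emph{is} a half-line, so the PMP does pin down the adjoint up to scale), and passes to the limit using the uniform bang-bang-with-finitely-many-switchings structure of the optimal controls. This establishes the maximization condition (i) for the adjoint issued from $\bar\zeta$; the constancy of $h$ and the inclusion $\tilde\lambda(t)\in N_{\mathcal R_t}(\tilde x(t))$ then follow from (i) via standard arguments. Your shortcut can be repaired by replacing the appeal to Theorem~\ref{th:PMPnonlin} with \cite[Theorem~3.1]{CFS}, exactly as is done in the proof of Proposition~\ref{generalhamilt}: that result directly gives the minimization condition (and hence constancy of the Hamiltonian) for the adjoint arc solving the adjoint ODE from any $\zeta\in N_{\mathcal R_{T(\bar x)}}(\bar x)$. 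With that substitution the rest of your outline---Frankowska for $\lambda(t)\in N^C$, convexity to upgrade Clarke to proximal, constancy for $h=0$---goes through.
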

\begin{proof}
We wish to prove that if $\bar{x} \in S$ and $x(\cdot)$ is the optimal 
trajectory steering $\bar{x}$ to the origin in the minimum time $T(\bar{x})$ then $x(t) \in S$ for all $0\le t\le T(\bar{x})$. 
In fact, let $\bar{u}(\cdot)$ be the corresponding optimal control and set $\tilde{u}(t) = \bar{u}(T(\bar{x})-t), 0\le t \le T(\bar{x})$. 
Let $\tilde{x}(\cdot)$ be the solution of the system
\begin{equation}\label{revdyn}
\left\{\begin{array}{ll}
\dot{x}(t)\: = \: -F(x(t))-G(x(t))\tilde{u}(t), \\
x(0)  \: = \:  0
\end{array}\right.
\end{equation}
and let $\bar{\zeta} \in \mathbb{S}^1 \cap N_{\mathcal{R}_{\tau}} (\bar{x})$ be such that $h(\bar{x},\bar{\zeta}) =0$.\\
\textit{Claim.} The solution $\tilde{\lambda}(t)$, $t\in [0,T(\bar{x})]$ of the adjoint system
\begin{equation}\label{adjtilde}
\left\{\begin{array}{ll}
\dot{\lambda}(t)\: = \:\lambda(t)\big( DF(\tilde{x}(t))+DG(\tilde{x}(t))\tilde{u}(t)\big) \\
\lambda(T(\bar{x}))  \: = \:  \bar{\zeta},
\end{array}\right.
\end{equation}
satisfies the following properties:
\begin{itemize}
\item[(i)] $\tilde{u}_i(t) = \mathrm{sign}\big(\langle \tilde{\lambda}(t),-G_i(\tilde{x}(t))\rangle\big)$ for a.e. $t\in [0,T(\bar{x})]$, $i=1,\ldots ,M$,
\item[(ii)] $0 = h(\tilde{x}(t),\tilde{\lambda}(t)) = \langle F(\tilde{x}(t)),\tilde{\lambda}(t)\rangle - \sum_{i=1}^{M}|
\langle G_i(\tilde{x}(t)),\tilde{\lambda}(t)\rangle|$ for all $t\in [0,T(\bar{x})]$,
\item[(iii)] $0 \ne \tilde{\lambda}(t) \in N_{\mathcal{R}_{T(\tilde{x}(t))}}(\tilde{x}(t)),$ for all $t\in [0,T(\bar{x})]$.
\end{itemize}
\textit{Proof of the Claim}. We recall that under our assumptions $\mathcal{R}_{T(\bar{x})}$ is strictly convex. In particular,
$N_{\mathcal{R}_{T(\bar{x})}}(\bar{x})$ is the convex hull of its exposed rays (see \cite[p.163]{ro} and \cite[Corollary 18.7.1, p. 169]{ro}).
Therefore let $\zeta \ne 0$ belonging to an exposed ray of $N_{\mathcal{R}_{T(\bar{x})}}(\bar{x})$. 
Recalling (b) in Proposition \ref{generalhamilt}, there exists $\sigma \le 0$ such that 
$\frac{(\zeta,\sigma)}{\sqrt{\|\zeta\|^2+\sigma^2}}$ belongs to an exposed ray of $N_{\mathrm{epi}(T)}(\bar{x},T(\bar{x}))$.\\
By Theorem 4.9 in \cite{CMJCA}, there exists a sequence $\{x_n\} \subset \mathrm{dom}(DT)$ such that $x_n \to \bar{x}$ and
$$
\lim_{n\to \infty} \frac{(DT(x_n),-1)}{\sqrt{\|DT(x_n)\|^2 +1}} = \frac{(\zeta,\sigma)}{\sqrt{\|\zeta\|^2+\sigma^2}}
$$
Let $u_n=(u_{1,n},\ldots,u_{M,n})$ be the optimal control steering the origin to $x_n$.
Since $N_{\mathcal{R}_{T(x_n)}}(x_n)$ is the half ray $\R^{+}DT(x_n)$, 
for $n$ large enough, then Pontryagin's Maximum Principle yields that
\begin{equation}\label{pontr_n}
u_{i,n}(t) = \mathrm{sign}\left(\langle \lambda_n(t),-G_i(x_n(t))\rangle\right),\,\text{a.e.}\,t\in [0,T(\bar{x})], \; i=1,\ldots ,M,
\end{equation}
where $x_n(\cdot)$ is the solution of
\begin{equation*}
\left\{\begin{array}{ll}
\dot{y}\: = \: -F(y)-G(y)u_n \\
y(0)  \: = \:  0,
\end{array}\right.
\end{equation*}
and $\lambda_n$ is the solution of
\begin{equation*}
\left\{\begin{array}{ll}
\dot{\lambda}(t)\: = \:\lambda(t)\big( DF(x_n(t))+DG(x_n(t))u_n(t)\big), &\,a.e.\\
\lambda(T(x_n))  \: = \:  \zeta_n \in \R^{+}DT(x_n).
\end{array}\right.
\end{equation*}
Since all controls $u_n$ are bang-bang with a finite number of switchings independent of $n$, up to a subsequence we
can assume that $u_n(\cdot)$ (where we have put $u_n(t) \equiv 0$ for $t\in (0,T(\bar{x})-T(x_n))$ if $T(x_n) < T(\bar{x})$)
converges pointwise a.e. to some admissible $u_0:[0,T(\bar{x})] \to [-1,1]^M$. Let $x_0(\cdot)$ be the solution of (\ref{revdyn})
with $u_0$ in place of $\tilde{u}$. Since obviously $x_0(T(\bar{x})) =\bar{x}$, by the uniqueness of the optimal control we have
that $u_0 (t) =\tilde{u}(t)$ a.e. on $[0,T(\bar{x})]$.
Up to another subsequence, we can assume that
$x_n(\cdot)$ converges uniformly to $\tilde{x}(\cdot)$ on $[0,T(\bar{x})]$,
and
$\lambda_n(\cdot)$ converges uniformly to $\lambda(\cdot)$ on $[0,T(\bar{x})]$. Then $\lambda(\cdot)$ is the solution of (\ref{adjtilde}) 
with $\zeta$ in place of $\bar{\zeta}$.
Recalling \eqref{pontr_n}, the above convergence properties imply that
\begin{equation}\label{signdef}
\tilde{u}_i(t) = \mathrm{sign}\left(\langle \lambda(t),-G_i(\tilde{x}(t))\rangle\right),\,\text{a.e.}\,t\in [0,T(\bar{x})], \; i=1,\ldots ,M.
\end{equation}
Let now $\bar{\zeta}_1,\bar{\zeta}_2 \in \mathbb{S}^1$ belong to exposed rays of $N_{\mathcal{R}_{T(\bar{x})}}(\bar{x})$ and let 
$\alpha, \beta \ge 0$ be such that $\bar{\zeta} =\alpha \bar{\zeta}_1 + \beta\bar{\zeta}_2$.
Let $\tilde{\lambda}_1(\cdot)$ (resp., $\tilde{\lambda}_2(\cdot)$) be the solutions of (\ref{adjtilde}) with $\bar{\zeta}_1$ 
(resp., $\bar{\zeta}_2$) in place of $\bar{\zeta}$. By (\ref{signdef}), we have, for a.e. $t\in [0,T(\bar{x})]$, that
\[
\tilde{u}_i(t) = \mathrm{sign}\left(\langle \tilde{\lambda}_1(t),-G_i(\tilde{x}(t))\rangle\right) = \mathrm{sign}
\left(\langle \tilde{\lambda}_2(t),-G_i(\tilde{x}(t))\rangle\right), \; i=1,\ldots ,M.
\]
Therefore, for a.e $t\in [0,T(\bar{x})]$,
\begin{eqnarray*}
\tilde{u}_i(t) &=&\mathrm{sign}\left(\langle \tilde{\alpha\lambda}_1(t)+\beta\lambda_2(t),-G_i(\tilde{x}(t))\rangle\right)\\
&=&\mathrm{sign}\big(\langle \tilde{\lambda}(t),-G_i(\tilde{x}(t))\rangle\big), \; i=1,\ldots ,M,
\end{eqnarray*}
which proves (i).\\
To prove (ii), observe that the fact that $h(\tilde{x}(t),\tilde{\lambda}(t))$ is constant follows in a standard way from 
the maximization property (i) (see, e.g., Corollary 6.4 in \cite{GK}).
Since $h(\tilde{x}(T(\bar{x})),\tilde{\lambda}(T(\bar{x}))) = h(\bar{x},\bar{\zeta}) = 0$, (ii) is proved.\\
Statement (iii) again follows from the maximization property (i) (see, e.g., Remark 5.2 in \cite{GK}), and the proof of the Claim is concluded.

We now complete the proof that $S$ is invariant for optimal trajectories. To this aim, fix $\bar{x} \in S$, together 
with $\bar{\zeta} \in \mathbb{S}^1\cap N_{\mathcal{R}_{T(\bar{x})}}$ such that $h(\bar{x},\bar{\zeta}) = 0$. By the above 
claim, the never vanishing adjoint vector $\tilde{\lambda}(\cdot)$ which is the solution of (\ref{adjtilde}) is such that 
$h(\tilde{x}(t),\tilde{\lambda}(t)) = 0$ and $\tilde{\lambda}(t) \in N_{\mathcal{R}_t}(\tilde{x}(t))$ for all $t \in [0,T(\bar{x})]$, 
which shows that each point $\tilde{x}(t)$ of the optimal trajectory $\tilde{x}(\cdot)$ steering the origin to 
$\bar{x}$ belongs to $S$. The prove of the invariance of $S$ is complete.
\end{proof}
\begin{Theorem}\label{rect2dim}
Under the same assumptions of Proposition \ref{Sinv}, the set $S$ is countably $\mathcal{H}^1$-rectifiable.
Moreover, for all $\bar{x}\in S$ there exists $\delta>0$ such that 
\begin{equation}\label{propagnonlin}
\mathcal{H}^1\big(S\cap B(\bar{x},\delta)\big) >0.
\end{equation}
\end{Theorem}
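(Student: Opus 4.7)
My plan is to exploit Proposition \ref{Sinv} to reduce the study of $S$ to the analysis of at most two distinguished optimal trajectories.

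Given $\bar{x}\in S$, Proposition \ref{Sinv} supplies an optimal trajectory $\tilde{x}\colon[0,T(\bar{x})]\to\mathbb{R}^2$ of the reversed dynamics from $0$ to $\bar{x}$, with a never vanishing adjoint $\tilde{\lambda}(\cdot)$ satisfying $h(\tilde{x}(t),\tilde{\lambda}(t))\equiv 0$, and with $\tilde{x}([0,T(\bar{x})])\subseteq S$. Evaluating at $t=0$ and using $F(0)=0$,
\[
0 \;=\; h(0,\tilde{\lambda}(0)) \;=\; -\sum_{i=1}^M\bigl|\langle\tilde{\lambda}(0),G_i(0)\rangle\bigr|,
\]
so $\tilde{\lambda}(0)\perp G_i(0)$ for every $i$. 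In $\mathbb{R}^2$ this linear system admits at most two solutions on $\mathbb{S}^1$, and none if $M=2$ with $G_1(0), G_2(0)$ linearly independent---in which case $S$ is empty and the theorem is vacuous. By the uniqueness of the bang-bang optimal control (assumption a)), each admissible initial adjoint produces, via the maximum principle, a unique Lipschitz trajectory $\tilde{x}_\pm\colon[0,\tau]\to\mathbb{R}^2$, whence
\[
S \;\subseteq\; \tilde{x}_+([0,\tau])\cup\tilde{x}_-([0,\tau]).
\]
This exhibits $S$ as a subset of the images of at most two Lipschitz curves, which gives the countable $\mathcal{H}^1$-rectifiability.

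For the inequality \eqref{propagnonlin} I distinguish two cases. If $\bar{x}\neq 0$, for any $0<\delta<\|\bar{x}\|$ I set $t^*:=\sup\{t\in[0,T(\bar{x})]\colon \|\tilde{x}(t)-\bar{x}\|\geq \delta/2\}$. Continuity yields $t^*<T(\bar{x})$ with $\|\tilde{x}(t^*)-\bar{x}\|=\delta/2$, and the connected compact set $\tilde{x}([t^*,T(\bar{x})])\subseteq \bar{B}(\bar{x},\delta/2)\cap S$ has diameter at least $\delta/2$; the classical bound $\mathcal{H}^1(K)\geq \mathrm{diam}(K)$ for continua then yields $\mathcal{H}^1(S\cap B(\bar{x},\delta))\geq \delta/2$. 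If instead $\bar{x}=0\in S$, I apply the same diameter argument to an initial arc $\tilde{x}_\pm([0,\varepsilon])$ of a singular trajectory.

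The main obstacle is precisely this non-degeneracy at the origin: the switching functions $\sigma_i(t):=\langle\tilde{\lambda}(t),-G_i(\tilde{x}(t))\rangle$ vanish at $t=0$, so the bang-bang control is a priori ambiguous there. My plan is to differentiate at $t=0$, exploiting $F(0)=0$ and the simplification $\dot{\tilde{\lambda}}(0)=\tilde{\lambda}(0)DF(0)$ afforded by $DG(0)=0$ (assumption 4)), to obtain $\sigma_i'(0)=-\tilde{\lambda}(0)DF(0)G_i(0)$; the second-order controllability condition 3) together with $\tilde{\lambda}(0)\perp G_i(0)$ then ensures $\sigma_i'(0)\neq 0$, so each $\sigma_i$ has a definite sign on a right neighborhood of $0$. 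Consequently $\tilde{u}$ is constant there and $\tilde{x}_\pm$ leaves the origin with speed $\|G(0)\tilde{u}(0^+)\|>0$, giving the required non-degeneracy.
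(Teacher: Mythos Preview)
Your approach is essentially the paper's own: reduce $S$ to at most two optimal trajectories by showing that the initial adjoint $\tilde{\lambda}(0)$ must be one of $\pm\zeta_0\perp G_i(0)$, and then argue that $\pm\zeta_0$ determines the trajectory uniquely via the sign of the switching function. Your computation of $\sigma_i'(0)$ and the appeal to assumption 3) to get $\sigma_i'(0)\neq 0$ match the paper exactly, and your propagation argument via the diameter bound for continua is a correct (and more explicit) version of what the paper dismisses as ``an immediate consequence of Proposition \ref{Sinv}''.

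There is, however, a genuine gap in the step ``by the uniqueness of the bang-bang optimal control (assumption a)), each admissible initial adjoint produces, via the maximum principle, a unique Lipschitz trajectory $\tilde{x}_\pm\colon[0,\tau]\to\mathbb{R}^2$''. Assumption a) gives uniqueness of the optimal control for a given \emph{endpoint}, not for a given \emph{initial adjoint}. The coupled state--adjoint system is a discontinuous ODE (the right-hand side jumps across $\{\sigma_i=0\}$), and uniqueness of its Cauchy problem requires transversal crossing at every zero of $\sigma_i$, i.e.\ $\dot\sigma_i\neq 0$ there. You establish this only at $t=0$. The paper closes the argument by invoking \cite[Sections 3.2 and 5]{GK}, which shows that under assumptions 1)--4) the switching function has nonvanishing derivative at \emph{every} zero in $[0,\tau]$; this is what allows the optimal control to be extended uniquely up to $\tau$. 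Without this, two trajectories sharing the same $\tilde\lambda(0)$ could in principle agree up to a later degenerate zero of $\sigma_i$ and then bifurcate, and assumption a) alone does not rule this out.
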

\begin{proof}
In order to prove the rectifiability property of $S$, it is enough to show that, if $S$ is nonempty, then it consists exactly 
of two optimal trajectories of the reversed dynamics
\begin{equation}\label{revdyn2}
\left\{\begin{array}{ll}
\dot{x}(t)\: = \: -F(x(t))-G(x(t))u(t), \,u\in [-1,1]^M, t\in [0,\tau],\\
x(0)  \: = \:  0.
\end{array}\right.
\end{equation}
Let $\bar{x} \in S$ together with $\bar{\zeta} \in \mathbb{S}^1\cap N_{\mathcal{R}_{T(\bar{x})}}$ be such that $h(\bar{x},\bar{\zeta}) = 0$.
Let $\tilde{u}(\cdot)$ be the optimal control steering the origin to $\bar{x}$ and let $\tilde{x}(\cdot)$ (resp., $\tilde{\lambda}(\cdot)$) 
be the corresponding optimal trajectory (resp., adjoint vector, the solution of (\ref{adjtilde})). Set $\zeta_0 = \tilde{\lambda}(0) \ne 0$.

We assume now that $M = 1$, i.e., the control is scalar. 
Since the Hamiltonian is constant along the optimal trajectory $\tilde{x}$, we have that
\[
|\langle G(0),\zeta_0\rangle| = h(0,\zeta_0) = 0\; (= h(0,-\zeta_0)).
\]
We now prove that each one of the vectors $\zeta_0$ and $-\zeta_0$ determines uniquely an optimal trajectory of (\ref{revdyn2}) contained in $S$. 
In fact, for every optimal trajectory $x(\cdot)$ of (\ref{revdyn2}), with a corresponding adjoint vector $\lambda(\cdot)$, we can define 
the switching function
\[
g^+_{x,\lambda}(t) = \langle -G(x(t)),\lambda(t)\rangle.
\]
Of course, $g^+_{x,\lambda}(0) = \langle -G(0),\pm\zeta_0\rangle = 0$ and $\dot{g}^+_{x,\lambda}(0) = \mp \langle DF(0)G(0),\zeta_0 \rangle$. 
The last expression is nonzero, due to the assumption 3), so that in a neighborhood of $t=0$, the sign of $g^+_{x,\lambda}(\cdot)$ is uniquely 
determined by $\pm \zeta_0$. Therefore, in a neighborhood of $t=0$ the optimal control is uniquely determined by 
$\mathrm{sign}(g^+_{x,\lambda}(\cdot))$, by the Maximum Principle, and so there are exactly two optimal trajectories of (\ref{revdyn2}) 
which belong to $S$ in a neighborhood of $t=0$. Since at every zero of $g^+_{x,\lambda}(\cdot)$ the derivative $\dot{g}^+_{x,\lambda}(\cdot)$ 
is nonvanishing (see, \cite[Sections 3.2 and 5]{GK}), the optimal control can be uniquely extended up to the time $t=\tau$. 
The proof is now complete for the case of a single input.

To conclude the proof of the rectifiability, let $M = 2$. The condition $h(0,\zeta_0)=0$ means that the system of equations
\begin{equation*}
\left\{\begin{array}{ll}
\langle G_1(0),\zeta_0 \rangle\: = 0,\\
\langle G_2(0),\zeta_0 \rangle\: = 0.
\end{array}\right.
\end{equation*}
has nontrivial solutions. So, if $G_1(0)$ and $G_2(0)$ are linearly independent, then $S$ is empty. Otherwise, both components 
of the optimal controls are uniquely determined by the sign of the corresponding switching functions, exactly as for the single input case.

The propagation property \eqref{propagnonlin} is an immediate consequence of Proposition \ref{Sinv}.
The proof is concluded.
\end{proof}

\section{The SBV regularity of $T$}\label{sec:SBV}
As a consequence of the results contained in Sections 4 and 5 we prove the SBV regularity of the minimum time function $T$. 
We recall first some properties of functions with bounded variation, and next we collect some known results on functions having 
epigraph with positive reach. As it was proved in \cite{CMW} and in \cite{GK}, the minimum time function has this property under 
the assumptions taken in Section 4 or in Section 5.
 
Let $\Omega \subset \R^N$ be open. We say that a function $f \in L_{\mathrm{loc}}^1(\Omega)$ has locally bounded variation, and 
we denote this fact by $f \in BV_{\mathrm{loc}}(\Omega)$, if for every ball $\Delta \subset \Omega$ the distributional derivative 
of $f$ in $\Delta$ is a finite Radon measure (see, e.g., \cite[Definition 3.1]{AFP}), which we denote by $Df$. 
We write $Df = D^af + D^sf$, where $D^af$ is absolutely continuous with respect to Lebesgue measure, and $D^sf$ is singular. 
The singular part $D^sf$ can also be decomposed into the jump part, $D^jf$, and the Cantor part, $D^cf$ (see, \cite[Section 3.9]{AFP}). 
In the case where $f$ is continuous, like in the case $f = T$ under our assumptions, the jump part obviously vanishes.
 
\begin{Definition} \label{defSBV} (see, e.g., \cite[Section 4.1]{AFP}) We say that $f\in BV_{\mathrm{loc}}(\Omega)$ is a 
special function of locally bounded variation, $f\in SBV_{\mathrm{loc}}(\Omega)$, if the Cantor part of its derivative $D^cf$ vanishes.
\end{Definition}
It is our aim, in this section, to prove that under the assumptions of Section 4 and 5, the Cantor part $D^cT$ vanishes, 
and so $T$ is a special function of locally bounded variation.

We state some further results.

\begin{Proposition} \label{propSBV} (see \cite[Proposition 4.2]{AFP}) Let $f \in BV(\Omega)$. Then $f \in SBV(\Omega)$ if 
and only if $D^sf$ is concentrated on a Borel set $\sigma$-finite with respect to $\mathcal{H}^{N-1}$, in particular, if 
it vanishes outside a countably $\mathcal{H}^{N-1}$-rectifiable set.
\end{Proposition}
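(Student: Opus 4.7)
The plan is to deduce both directions from two standard structural facts about $BV$ functions, which I would recall first. Write the distributional derivative as $Df = D^a f + D^s f$ with $D^s f = D^j f + D^c f$. The structural facts I would invoke are: (a) by the Federer--Vol'pert theorem, the jump part $D^j f$ is concentrated on the (approximate) jump set $J_f$, which is countably $\mathcal{H}^{N-1}$-rectifiable, hence $\sigma$-finite with respect to $\mathcal{H}^{N-1}$; (b) the Cantor part satisfies $|D^c f|(B) = 0$ for every Borel set $B$ that is $\sigma$-finite with respect to $\mathcal{H}^{N-1}$ (equivalently, $|D^c f|$ vanishes on every Borel set of finite $\mathcal{H}^{N-1}$-measure).

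For the forward implication, if $f \in SBV(\Omega)$ then by definition $D^c f \equiv 0$, so $D^s f = D^j f$ is concentrated on $J_f$, which is $\sigma$-finite with respect to $\mathcal{H}^{N-1}$ by (a).

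For the converse, assume $D^s f$ is concentrated on a Borel set $E$ with $E = \bigcup_n E_n$ and $\mathcal{H}^{N-1}(E_n) < +\infty$. Since $|D^c f| \le |D^s f|$ as Borel measures, $|D^c f|$ is concentrated on $E$ as well. By (b), $|D^c f|(E_n) = 0$ for every $n$, hence $|D^c f|(E) = 0$ and $D^c f = 0$, i.e., $f \in SBV(\Omega)$. The \textit{in particular} clause is then immediate, since any countably $\mathcal{H}^{N-1}$-rectifiable set is, by definition, contained in a countable union of Lipschitz images of bounded subsets of $\mathbb{R}^{N-1}$ and is therefore $\sigma$-finite with respect to $\mathcal{H}^{N-1}$.

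The only nontrivial ingredient is property (b), which is the crux of the statement: it encodes the ``diffuse'' nature of the Cantor part and is the obstacle one would have to address from scratch. I would not reprove it here, but rather cite the corresponding structural theorems in \cite{AFP} (Chapter 3), where $D^c f$ is defined precisely as the part of $D^s f$ vanishing on $\mathcal{H}^{N-1}$-$\sigma$-finite sets, so that (b) becomes essentially definitional once the decomposition $D^s f = D^j f + D^c f$ has been carried out through the fine properties of $BV$ functions.
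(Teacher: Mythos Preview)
Your argument is correct and is essentially the standard proof one finds in \cite{AFP}. Note, however, that the paper does not actually prove this proposition: it is stated with a direct citation to \cite[Proposition 4.2]{AFP} and used as a black box, so there is no ``paper's own proof'' to compare against. Your sketch correctly identifies that the heart of the matter is the defining property of the Cantor part---that $|D^c f|$ vanishes on every Borel set which is $\sigma$-finite with respect to $\mathcal{H}^{N-1}$---after which both implications are immediate.
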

\noindent Recalling the definition of non-Lipschitz points given in Section 3 (see Definition \ref{definonlip}), we obtain the following result.
The notation $\mu_{\lfloor E}$ means the restriction of the measure $\mu$ to the set $E$.

\begin{Proposition}\label{SBVnonlip} Let $\Omega \subset \R^N$ be open and let $f\in SBV_{\mathrm{loc}}(\Omega)$. 
Let $$K = \{x \in \Omega: f\text{ is non-Lipschitz at } x\}.$$ Then $D^sf_{\lfloor\Omega \setminus K} = 0$.
\end{Proposition}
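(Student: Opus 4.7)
The plan is to prove the stronger fact that $f$ is locally Lipschitz on the open set $\Omega \setminus K$; the conclusion then follows by standard Sobolev regularity. In particular, the \emph{SBV} hypothesis is not essential here: the same argument yields, for any $f \in BV_{\mathrm{loc}}(\Omega)$, that the singular part $D^sf$ puts no mass on the complement of the non-Lipschitz set.

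First I would record, as already noted in the paper right after Definition \ref{definonlip}, that $K$ is closed, so that $\Omega \setminus K$ is open. This openness is what allows one to localize $f$ on a neighborhood of every point outside $K$.

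The core step is then the following: for each $\bar{x} \in \Omega \setminus K$ I would produce $r>0$ and $C>0$ such that
\[
|f(y) - f(z)| \le C \|y-z\| \qquad \text{for all } y,z \in B(\bar{x},r).
\]
This is obtained by a contrapositive/compactness argument. If no such pair $(r,C)$ existed, then for every positive integer $n$ one could choose $y_n, z_n \in B(\bar{x},1/n)$ with $y_n \ne z_n$ and $|f(y_n) - f(z_n)|/\|y_n - z_n\| > n$. The sequences $\{y_n\}, \{z_n\}$ both converge to $\bar{x}$ and the corresponding difference quotients tend to $+\infty$, contradicting $\bar{x} \notin K$ by Definition \ref{definonlip}.

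Once the local Lipschitz property is in hand, the conclusion is immediate: on the open set $\Omega \setminus K$ the function $f$ belongs to $W^{1,\infty}_{\mathrm{loc}}(\Omega \setminus K) \subset W^{1,1}_{\mathrm{loc}}(\Omega \setminus K)$, so the distributional derivative $Df$ restricted to $\Omega \setminus K$ is absolutely continuous with respect to Lebesgue measure; equivalently $D^sf_{\lfloor \Omega \setminus K} = 0$, as claimed. The only mildly non-routine point, and the step I expect to be the main (indeed the only real) obstacle, is the contrapositive passage above, which upgrades the pointwise condition in Definition \ref{definonlip} to a uniform Lipschitz bound on a ball; everything else is a direct application of standard facts on Lipschitz and BV functions.
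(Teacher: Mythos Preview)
Your proposal is correct and follows essentially the same approach as the paper: the paper's proof simply asserts that ``by definition, $f$ is locally Lipschitz in the open set $\Omega\setminus K$'' and then invokes the standard fact that the distributional derivative of a locally Lipschitz function is absolutely continuous with respect to Lebesgue measure. Your contrapositive argument merely spells out the one-line justification the paper leaves implicit, and your observation that the $SBV$ hypothesis is unnecessary is also consistent with the paper's proof, which nowhere uses it.
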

\begin{proof}
By definition, $f$ is locally Lipschitz in the open set $\Omega \setminus K$. Therefore $Df$ is absolutely continuous
with respect to Lebesgue measure $\mathcal{L}^N$ in $\Omega \setminus K$ (see, e.g., \cite[Proposition 2.13]{AFP}), i.e.,
$D^sf_{\lfloor\Omega \setminus K}=0$.
\end{proof}
\noindent Consequently, by putting together the two previous Propositions, we obtain

\begin{Corollary}\label{coSBV}  Let $\Omega \in \R^N$ be open and let $f\in BV_{\mathrm{loc}}(\Omega)$. 
Assume that the set of non-Lipschitz points of $f$  be countably $\mathcal{H}^{N-1}$-rectifiable. Then $f\in SBV_{\mathrm{loc}}(\Omega)$.
\end{Corollary}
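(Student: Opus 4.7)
The plan is to combine Propositions \ref{propSBV} and \ref{SBVnonlip} via a localization argument. The key observation is that the argument given in the proof of Proposition \ref{SBVnonlip} does not actually require $f\in SBV_{\mathrm{loc}}(\Omega)$: it only uses that $f$ is locally Lipschitz on the open set $\Omega\setminus K$ (which is immediate from the definition of $K$), together with $f\in BV_{\mathrm{loc}}(\Omega)$, to conclude that $Df$ is absolutely continuous with respect to $\mathcal{L}^N$ on $\Omega\setminus K$. Therefore, under the sole assumption $f\in BV_{\mathrm{loc}}(\Omega)$, we still obtain
\[
|D^s f|(\Omega\setminus K)=0,
\]
i.e., $D^sf$ is concentrated on $K$.

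Next, I would exploit the rectifiability hypothesis on $K$. By definition of countable $\mathcal{H}^{N-1}$-rectifiability, $K$ is contained in a countable union of Lipschitz images of bounded subsets of $\mathbb{R}^{N-1}$; each such image has finite $\mathcal{H}^{N-1}$-measure, so $K$ is $\sigma$-finite with respect to $\mathcal{H}^{N-1}$.

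To conclude, I would localize. Fix an arbitrary bounded open set $\Omega'$ with $\overline{\Omega'}\subset \Omega$; then $f\in BV(\Omega')$, and the measure $D^sf_{\lfloor \Omega'}$ is concentrated on the Borel set $K\cap\Omega'$, which is still $\sigma$-finite with respect to $\mathcal{H}^{N-1}$. Applying Proposition \ref{propSBV} on $\Omega'$, we deduce $f\in SBV(\Omega')$. Since $\Omega'\Subset\Omega$ was arbitrary, this gives $f\in SBV_{\mathrm{loc}}(\Omega)$, as desired.

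There is no real obstacle here: the result is essentially a direct corollary of the two preceding propositions, the only subtle point being that Proposition \ref{SBVnonlip} is stated under the assumption $f\in SBV_{\mathrm{loc}}$ (which is what we wish to conclude), so one must observe that its proof actually works for any $f\in BV_{\mathrm{loc}}$. Once this is noted, the remaining verification is purely measure-theoretic.
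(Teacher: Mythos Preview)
Your proof is correct and follows exactly the route the paper intends: the paper simply states that the corollary follows ``by putting together the two previous Propositions,'' and you have spelled this out carefully. In particular, you rightly flag the one subtle point the paper glosses over, namely that Proposition~\ref{SBVnonlip} is stated for $f\in SBV_{\mathrm{loc}}$ but its proof only uses $f\in BV_{\mathrm{loc}}$; once this is observed, the localization and the application of Proposition~\ref{propSBV} are straightforward.
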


\noindent We are now ready for the main results of this section.

\begin{Theorem}\label{TSBVlin} Consider the linear control system (\ref{linsys}) under the assumption (\ref{kalman}). 
Then the minimum time function $T$ to reach the origin satisfies $T \in SBV_{\mathrm{loc}}(\R^N)$.
\end{Theorem}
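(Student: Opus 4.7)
The plan is to assemble ingredients that are already in place. First, under the normality condition \eqref{kalman}, Theorem 3.7 of \cite{CMW} guarantees that $\mathrm{epi}(T)$ has locally positive reach on all of $\R^N$. Invoking Theorem \ref{fposreach}(ii), this immediately yields $T\in BV_{\mathrm{loc}}(\R^N)$, so the distributional derivative $DT$ is a locally finite Radon measure and its Cantor part $D^cT$ is well defined.

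Next I would identify where the singular part of $DT$ can live. By Theorem \ref{charnonlip}, the set of non-Lipschitz points of $T$ coincides with the set $\mathcal{S}$ defined in \eqref{defS}. Combining with Proposition \ref{SBVnonlip}, the singular part $D^sT$ is concentrated on $\mathcal{S}$ (outside $\mathcal{S}$ the function $T$ is locally Lipschitz, hence $DT$ is absolutely continuous there).

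The key structural input is Theorem \ref{N-1rect}: the set $\mathcal{S}$ is closed and countably $(N-k)$-rectifiable, where $k=\mathrm{rank}\,B\ge 1$. Two cases arise. If $k=N$, then $\mathcal{S}=\emptyset$ and $T$ is locally Lipschitz on $\R^N$, whence trivially $T\in SBV_{\mathrm{loc}}(\R^N)$. If $k<N$, then $\mathcal{S}$ is countably $(N-k)$-rectifiable with $N-k\le N-1$, and in particular it is countably $\mathcal{H}^{N-1}$-rectifiable.

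Finally I apply Corollary \ref{coSBV} (whose hypotheses are exactly: $T\in BV_{\mathrm{loc}}$ and the non-Lipschitz set is countably $\mathcal{H}^{N-1}$-rectifiable) to conclude $T\in SBV_{\mathrm{loc}}(\R^N)$. No genuine obstacle remains at this stage; the proof is essentially a careful citation of the already established Theorems \ref{charnonlip} and \ref{N-1rect} together with the general $BV$ machinery summarised in Proposition \ref{propSBV} and Corollary \ref{coSBV}. The conceptual difficulty was entirely absorbed into the rectifiability Theorem \ref{N-1rect}.
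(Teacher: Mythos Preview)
Your proposal is correct and follows essentially the same route as the paper's own proof, which simply combines Theorem \ref{fposreach}, Theorem \ref{N-1rect}, and Corollary \ref{coSBV}. You have merely made explicit the intermediate citations (Theorem \ref{charnonlip} to identify the non-Lipschitz set with $\mathcal{S}$, and the case split on $k$) that the paper leaves implicit.
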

\begin{Corollary}\label{sobolevlin}
Let $1\le J\le N$ be the smallest integer such that 
\[
\mathrm{rk}\Big[ B, AB, \ldots, A^{J-1}\Big] = N.
\]
Then for every $1\le p < N(N-J+1)/(N-J)$ we have
\[
T\in W^{1,p}_{\mathrm loc}\big( \mathbb{R}^N\big). 
\]
\end{Corollary}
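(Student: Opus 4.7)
The plan is to reduce the Sobolev regularity to an $L^p$ estimate on the classical gradient of $T$ and then to derive that estimate from the Hölder continuity of $T$ together with the rectifiability results of Section~\ref{sec:lin}.

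By Theorem~\ref{TSBVlin} we have $T\in SBV_{\mathrm{loc}}(\R^N)$, and since $T$ is continuous the jump part of $DT$ also vanishes; hence $DT=\nabla T\,dx$, with $\nabla T$ the a.e.~pointwise gradient provided by Theorem~\ref{fposreach}. Thus $T\in W^{1,p}_{\mathrm{loc}}$ is equivalent to $\nabla T\in L^p_{\mathrm{loc}}$. The next ingredient I would use is the classical fact, cited in the remark after Theorem~\ref{th:PMPlin}, that a normal linear system whose Kalman-type rank condition is first attained at stage $J$ has $T$ locally Hölder continuous of exponent $1/J$.

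Away from the closed set $\mathcal{S}$ of non-Lipschitz points, $T$ is locally semiconvex (by the positive reach of $\mathrm{epi}(T)$, see \cite[Theorem~5.1]{CM}), hence locally Lipschitz. Comparing the local Lipschitz estimate on a ball $B(x,\delta/2)\subset\R^N\setminus\mathcal{S}$ with the global Hölder bound of exponent $1/J$ at the scale $\delta\sim d(x,\mathcal{S})$ forces the pointwise bound $|\nabla T(x)|\le C\,d(x,\mathcal{S})^{1/J-1}$ in a neighborhood of $\mathcal{S}$; otherwise moving from $x$ in the direction of $\nabla T(x)$ would produce two close points whose $T$-increment violates the Hölder estimate. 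Combined with Theorem~\ref{N-1rect}, which controls the Lebesgue measure of the $\delta$-tubular neighborhood of the rectifiable set $\mathcal{S}$, this yields a weak-type estimate of the form $|\{|\nabla T|>\lambda\}\cap K|\le C_K\lambda^{-q}$ on each compact $K$. Integrating against $p\lambda^{p-1}\,d\lambda$ then gives $\nabla T\in L^p_{\mathrm{loc}}$ for every $p<q$.

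The main technical obstacle is pinning down the sharp exponent $q=N(N-J+1)/(N-J)$. A direct application of the above gradient bound combined only with the $(N-k)$-dimensional rectifiability of $\mathcal{S}$ (where $k=\mathrm{rk}\,B$) produces just the cruder exponent $kJ/(J-1)$; to reach the value in the statement one has to exploit the finer stratification of $\mathcal{S}$ already used in the proof of Theorem~\ref{N-1rect}, namely the partition according to the multiplicity of zeros of the switching function $g_\zeta$. Strata of lower dimension correspond to points where the Hölder behaviour of $T$ is worse and hence the blow-up of $|\nabla T|$ faster, while the ambient volume of their tubular neighborhoods is correspondingly smaller; balancing these two competing rates, together with the structural relations between $k$, $J$ and $N$ imposed by the minimality of $J$ in the rank condition, is what produces exactly the claimed value of~$q$. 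Carrying out this stratified bookkeeping is the delicate point of the proof.
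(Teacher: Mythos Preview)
Your approach diverges sharply from the paper's. Both begin the same way---continuous $SBV$ forces $DT$ to be absolutely continuous, so $T\in W^{1,1}_{\mathrm{loc}}$---and both record the H\"older exponent $1/J$. But the paper then finishes in one sentence, simply citing a standard Sobolev-space result (Evans, Theorem~3, p.~277) to pass from $W^{1,1}_{\mathrm{loc}}\cap C^{0,1/J}$ to the stated $W^{1,p}_{\mathrm{loc}}$. No further geometry of $\mathcal{S}$, no gradient bounds, no tubular neighbourhoods enter the paper's argument.

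Your programme---pointwise control of $|\nabla T|$ through $d(x,\mathcal{S})$, a weak-type estimate via tubular volumes, then a stratified bookkeeping for the sharp exponent---is a genuinely different and much heavier route. Two technical steps in your sketch are not secured. First, the bound $|\nabla T(x)|\le C\,d(x,\mathcal{S})^{1/J-1}$ does not follow from the comparison you describe: the only semiconvexity-type inequality the positive reach of $\mathrm{epi}(T)$ delivers is
\[
T(y)\ge T(x)+\langle\nabla T(x),y-x\rangle-\varphi\sqrt{|\nabla T(x)|^2+1}\big(\|y-x\|^2+|T(y)-T(x)|^2\big),
\]
whose ``constant'' already contains $|\nabla T(x)|$; combined with $|T(y)-T(x)|\lesssim|y-x|^{1/J}$, the error term is of order $|\nabla T(x)|\,|y-x|^{2/J}$, which for $J>2$ dominates the linear term as $|y-x|\to 0$ and blocks the absorption argument you have in mind. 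Second, you yourself flag the stratified computation of the sharp exponent $N(N-J+1)/(N-J)$ as the delicate point and leave it entirely open. So while the strategy is coherent, what you have is an outline with substantial gaps, set against a three-line citation-based proof in the paper.
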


\begin{Theorem} \label{TSBVnonlin}
Consider the nonlinear system (\ref{nonlinsys}) under the assumptions 1) -- 4) stated in Section 5. Then there exists 
$\mathcal{T}>0$ depending only on $G(0)$, $DF(0)$, and on the Lipschitz constant $L$ of $DF$ and $DG$, such that 
$T \in SBV_{\mathrm{loc}}(\mathrm{int}(\mathcal{R}_{\mathcal{T}}))$.
\end{Theorem}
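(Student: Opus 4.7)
The plan is to invoke Corollary \ref{coSBV} in the open region $\mathrm{int}(\mathcal{R}_{\mathcal{T}})$, after checking its two hypotheses: local bounded variation and $\mathcal{H}^{N-1}$-rectifiability of the non-Lipschitz set. Here $N=2$, so $\mathcal{H}^{N-1}$-rectifiability is exactly $\mathcal{H}^1$-rectifiability, which matches the output of Theorem \ref{rect2dim}.

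First I would fix $\mathcal{T}>0$ as the constant supplied by Theorems 5.1, 6.2, and 6.5 of \cite{GK}, i.e., the threshold (depending only on $L$, $DF(0)$, $G(0)$) below which properties a), b), c) of Section \ref{sec:nonlin} hold. Property c) says that $\mathrm{epi}(T)$ has locally positive reach on $\mathcal{R}_{\mathcal{T}}$. Since $T$ is continuous there, Theorem \ref{fposreach}(ii) yields $T\in BV_{\mathrm{loc}}(\mathrm{int}(\mathcal{R}_{\mathcal{T}}))$, which is the first ingredient needed for Corollary \ref{coSBV}.

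Next I would identify the set of non-Lipschitz points of $T$ inside $\mathrm{int}(\mathcal{R}_{\mathcal{T}})$. Pick any $\tau\in(0,\mathcal{T})$ and consider the set $S$ defined in \eqref{defSnonlin} for this $\tau$. By Proposition \ref{Nonlip} together with Proposition \ref{Hamzero} (applied point by point in $\mathrm{int}(\mathcal{R}_\tau)$, where epi$(T)$ has positive reach), $S$ coincides exactly with the set of non-Lipschitz points of $T$ in $\mathcal{R}_\tau$. Theorem \ref{rect2dim} then tells us that $S$ is countably $\mathcal{H}^1$-rectifiable. Since this holds for every $\tau<\mathcal{T}$, the non-Lipschitz set of $T$ in $\mathrm{int}(\mathcal{R}_{\mathcal{T}})$ is a countable union of countably $\mathcal{H}^1$-rectifiable sets, hence itself countably $\mathcal{H}^1$-rectifiable, i.e., countably $\mathcal{H}^{N-1}$-rectifiable in dimension $N=2$.

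Finally I would apply Corollary \ref{coSBV} with $\Omega=\mathrm{int}(\mathcal{R}_{\mathcal{T}})$ and $f=T$ to conclude $T\in SBV_{\mathrm{loc}}(\mathrm{int}(\mathcal{R}_{\mathcal{T}}))$. There is essentially no hard step here: the work is all done by Theorem \ref{rect2dim} (hardest), Propositions \ref{Nonlip}--\ref{Hamzero} (identification of the singular set), and the positive reach property from \cite{GK} (providing the $BV$ regularity). The only mild subtlety is to make sure that $D^cT$, which by Proposition \ref{SBVnonlip} is concentrated on the non-Lipschitz set, is indeed controlled by $\mathcal{H}^{N-1}$-rectifiability via Proposition \ref{propSBV}; this is exactly what Corollary \ref{coSBV} packages together.
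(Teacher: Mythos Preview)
Your proposal is correct and follows essentially the same approach as the paper, which simply says the result follows by combining Theorem~\ref{fposreach}, Theorem~\ref{rect2dim}, and Corollary~\ref{coSBV}. You have just unpacked these ingredients more explicitly (in particular the identification of $S$ with the non-Lipschitz set via Propositions~\ref{Nonlip} and~\ref{Hamzero}, and the exhaustion over $\tau<\mathcal{T}$), but the logical structure is identical.
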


\noindent \textit{Proof of Theorem \ref{TSBVlin} and \ref{TSBVnonlin}.}
The statements follow immediately by putting together Theorem \ref{fposreach} and Theorem \ref{N-1rect} 
(resp., Theorem \ref{rect2dim}) and Corollary \ref{coSBV}.
\qed

\medskip

\noindent \textit{Proof of Corollary \ref{sobolevlin}.}
Since $T$ is continuous and belongs to $SBV_{\mathrm{loc}}(\R^N)$ then its distributional derivative $DT$ is a locally summable function.
Moreover, it is well known that $T$ is H\"older continuous with exponent $1/J$ (see, e.g., \cite[Theorem IV.1.9]{BCD} and references therein).
The statement then follows by applying standard results on Sobolev spaces
(see, e.g., Theorem 3, p. 277, in \cite{eva}).
\qed

\section{Propagation of non-Lipschitz singularities}\label{sec:propag}
This section deals with a lower estimate of the dimension of $\mathcal{S}$ for the linear case.
We show that the $\mathcal{H}^{N-k}$-rectifiability of $\mathcal{S}$ is indeed
optimal, in the sense of Theorems \ref{propagN-2} and \ref{propagN-1} below, at least for a small time. Those statements can be seen as propagation
results for singularities of non-Lipschitz type for the minimum time function.\\
We consider the linear system \eqref{linsys} under the assumptions \eqref{kalman} and \eqref{rkB}.
Let $N > 2$ and define, for $\tau >0$, $\mathcal{S}(\tau) = \mathcal{S} \cap \mathrm{bdry}\mathcal{R}_\tau$. We assume that
$k\le N-1$, otherwise $\mathcal{S}=\emptyset$.
\begin{Theorem}\label{propagN-2} There exists $\tilde{\tau} >0$, depending only on $A, B, N$, satisfying the following the property:
for all $\tau\le\tilde{\tau}$ and all $\mathcal{H}^{N-2}$-a.e. $x\in \mathcal{S}(\tau)$ there exists a neighborhood $V$ of $x$ such that
\begin{equation}\label{propag}
\mathcal{H}^{N-1-k} (V\cap \mathcal{S}(\tau)) > 0.
\end{equation}
\end{Theorem}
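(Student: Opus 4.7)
The plan is to refine the parameterization of $\mathcal{S}(\tau)$ underlying the rectifiability proof of Theorem \ref{N-1rect} and to show that the associated map achieves maximal rank on an open subset. By Remark \ref{remnewS} together with Lemma \ref{lemmaham}, every $x\in \mathcal{S}(\tau)$ can be written as $x=\Psi_\tau(\zeta)$ with
$$
\Psi_\tau(\zeta) \;=\; \sum_{i=1}^M \int_0^\tau e^{-At} b_i\,\mathrm{sign}\big(\langle\zeta, e^{-At}b_i\rangle\big)\,dt,
$$
where $\zeta$ ranges over the smooth submanifold $Z_\tau=\{\zeta\in \mathbb{S}^{N-1}:\langle\zeta, e^{-A\tau}b_i\rangle = 0,\ i=1,\dots, M\}$, which has dimension exactly $N-1-k$. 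Following the scheme of the proof of Theorem \ref{N-1rect}, I would stratify $Z_\tau$ by the multiplicity and location of zeros of the switching functions $g_i(t,\zeta)=\langle\zeta,e^{-At}b_i\rangle$ on $[0,\tau]$; on each open stratum the map $\Psi_\tau$ is of class $\mathcal{C}^\infty$, and its differential is encoded by the implicit dependence of the interior switching times $t^*_{i,j}(\zeta)\in (0,\tau)$ on $\zeta$.

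The key step is to exhibit, for $\tau$ smaller than some $\tilde\tau$ depending only on $A,B,N$, a distinguished stratum of $Z_\tau$ on which $d\Psi_\tau$ has maximal rank $N-1-k$ at a chosen $\bar\zeta$. Differentiating through the integrals, each switching contributes a rank-one term of the form $2\,e^{-At^*_{i,j}}b_i \otimes dt^*_{i,j}$ to $d\Psi_\tau$, so the rank reduces to the rank of the family of implicit differentials $\{dt^*_{i,j}\}$ on $T_{\bar\zeta}Z_\tau$ together with linear independence of the vectors $e^{-At^*_{i,j}}b_i$ in $\mathbb{R}^N$. Using the Kalman normality condition \eqref{kalman} and the analytic dependence of the zeros of $g_i(\cdot,\zeta)$ on $\zeta$, I would select $\tilde\tau$ small enough to single out a stratum carrying exactly $N-1-k$ simple switching times in $(0,\tau)$ whose implicit differentials form a basis of $T^*_{\bar\zeta}Z_\tau$ and whose associated vectors $e^{-At^*_{i,j}}b_i$ are linearly independent in $\mathbb{R}^N$.

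Given such a full-rank point $\bar\zeta$, the area formula applied to a bilipschitz branch of $\Psi_\tau$ on a neighborhood $U\subset Z_\tau$ of $\bar\zeta$ yields $\mathcal{H}^{N-1-k}(\Psi_\tau(U))>0$, and hence $\mathcal{H}^{N-1-k}(V\cap \mathcal{S}(\tau))>0$ for every open $V\ni\bar{x}:=\Psi_\tau(\bar\zeta)$ contained in $\Psi_\tau(U)$. The almost-everywhere statement then follows by a standard density argument on the $(N-1-k)$-rectifiable set $\mathcal{S}(\tau)$ provided by Theorem \ref{N-1rect}: the rank-drop locus is contained in a real-analytic proper subvariety of each stratum of $Z_\tau$, hence is $\mathcal{H}^{N-1-k}$-negligible, so that for $\mathcal{H}^{N-1-k}$-a.e.\ $x\in\mathcal{S}(\tau)$ (and a fortiori for $\mathcal{H}^{N-2}$-a.e.\ $x$) there is a neighborhood satisfying \eqref{propag}.

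The main difficulty is the rank computation in the second paragraph: quantitatively choosing $\tilde\tau$ and identifying a stratum whose switching times both fit inside $(0,\tau)$ and carry an $(N-1-k)$-dimensional transverse variation. This amounts to showing that, on the chosen stratum, the map $\zeta\mapsto (t^*_{i,j}(\zeta))$ is a local diffeomorphism onto an open subset of $\mathbb{R}^{N-1-k}$, which in turn follows from a careful use of the Kalman condition to control the higher derivatives $\partial^n g_i/\partial t^n$ that govern the local structure of switchings.
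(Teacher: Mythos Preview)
Your outline is viable but more indirect than the paper's, and the step you flag as the ``main difficulty'' is precisely the one the paper's choice of parametrization dissolves. You parametrize $\mathcal{S}(\tau)$ by $\zeta\in Z_\tau$ and then need $d\Psi_\tau$ to have full rank, which you correctly reduce to showing that $\zeta\mapsto (t^*_{i,j}(\zeta))$ is a local diffeomorphism onto an open set in $\mathbb{R}^{N-1-k}$. The paper goes in the \emph{opposite direction}: in the single-input case it parametrizes $\mathcal{S}_j(\tau)$ directly by the switching times $(s_1,\dots,s_j)$ themselves, via
\[
(s_1,\dots,s_j)\;\longmapsto\; \int_0^{s_1}e^{-As}b\,ds-\int_{s_1}^{s_2}e^{-As}b\,ds+\cdots,
\]
so that the Jacobian columns are simply $\pm 2e^{-As_i}b$. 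Full rank then reduces to the linear independence of $\{e^{-As_1}b,\dots,e^{-As_j}b\}$ for distinct $s_i\in[0,\tilde\tau]$, which the paper proves by a short Taylor-expansion argument using \eqref{kalman} (its Step~1). A second ingredient you do not invoke is Brunovsk\'y's bound that each switching function has at most $N-1$ zeros on any interval of length~$\bar\tau$; this gives a \emph{finite} disjoint decomposition $\mathcal{S}(\tau)=\bigcup_{j=0}^{N-2}\mathcal{S}_j(\tau)$ into smooth immersed $j$-surfaces, and the ``a.e.'' statement follows at once since every stratum with $j<N-2$ is $\mathcal{H}^{N-2}$-null---no analytic-variety or density argument is needed. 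The multi-input case is handled by stratifying according to the rank of the family $\{e^{-As^i_\ell}b_i\}$.

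In short, both routes pass through the same linear-independence fact for the vectors $e^{-As_i}b_i$, but the paper uses it to parametrize \emph{by} the switching times rather than to control their dependence on $\zeta$, which makes the rank computation explicit and elementary and removes the need for the implicit-function and real-analytic machinery you propose.
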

\begin{proof}
We divide the proof into some steps.\\
We will use a result which was proved, e.g., in \cite{Bru}
(see the proof of Lemma 8). The statement is as follows.
\begin{equation}\label{fact}
\begin{split}
\text{There exists $\bar{\tau} >0$, depending only on $A, B, N$, such that for every $\zeta \in \mathbb{S}^{N-1}$ the switching}\\
\text{function $g(\cdot,\zeta) = \langle \zeta, e^{-A\cdot}b \rangle$ has at most $N-1$ zeros in $[s,s+\bar{\tau}]$ for every $s\ge 0$.}\qquad\qquad\;\;\,
\end{split}
\end{equation}

\smallskip

\noindent
\textit{Claim 1}. The statement of the Theorem holds true in the case $B = b$, a vector.\\
\textit{Proof of Claim 1}. Let $\bar{\tau}$ be given by \eqref{fact}. Fix $0<\tau < \bar{\tau}$ and let $0 <j \le N-2$.
We say that $x\in \mathcal{S}(\tau)$ belongs to $\mathcal{S}_j(\tau)$ if
there exist times $0 < s_1 < s_2 < \cdots < s_j < \tau$ such that
$$
x= \pm \int_{0}^{\tau} {e^{-As}b\gamma(s)ds},
$$
where
\begin{equation*}
\gamma(s) = \left\{ \begin{array}{ll}
\,\,1 & \textrm{if} \,\, 0 < s < s_1,\\
-1 & \textrm{if} \,\, s_1 < s < s_2,\\
\cdots \\
 (-1)^j & \textrm{if} \,\, s_j < s < \tau.
\end{array} \right.
\end{equation*}
In other words, the optimal control steering the origin to $x$ for the reversed dynamics has exactly $j$ switchings in the interval $(0,\tau)$.

\smallskip

\noindent\textit{Step 1}. There exists $\tilde{\tau} > 0$, depending only on $A, b$ and $N$, such that if
$0 \le s_1 < s_2 < \cdots < s_j \le \tilde{\tau}$ then
\begin{equation}\label{indep}
\mathrm{rank}\left[ e^{-As_1}b, e^{-As_2}b, \cdots, e^{-As_j}b  \right] = j.
\end{equation}
In order to prove \eqref{indep}, for $s \ge 0$ and $\zeta\in\mathbb{R}^N$ set $g(s)= \langle\zeta , e^{-As}b\rangle$
and $H=\{ \zeta\in \mathbb{R}^N: g(s_i,\zeta)=0,i=1,\ldots ,j\}$, $0 \le s_1 < s_2 < \cdots < s_j \le \tau$. 
We claim that $\mathrm{dim}\, H=N-j$. Indeed, if $g(s_1,\zeta) =0$, then
\[
0= \langle \zeta , b - As_1 b\rangle + o(\tau),
\]
so that $\langle\zeta ,b\rangle =0$ since $\tau$ can be chosen small enough. Furthermore, if $j>1$, there exists $\bar{s}_1\in (s_1,s_2)$ such that
$\frac{\partial}{\partial s}g(\bar{s}_1,\zeta) =0$, which in turn implies
\[
0= - \langle \zeta , Ab - A^2\bar{s}_1 b\rangle + o(\tau),
\]
so that $\langle\zeta ,Ab\rangle =0$ since $\tau$ can be chosen small enough. The same argument provides times $\bar{s}_i$, $i=2,\ldots j-1$ such that
\[
0= \frac{\partial^i g}{\partial s^i}(\bar{s}_i,\zeta)= \langle \zeta , A^i b\rangle + O(\tau),
\]
i.e., $\langle \zeta , A^i b\rangle=0$. The proof is completed by invoking the rank condition \eqref{kalman}.

\smallskip

\noindent
\textit{Step 2}. If $ 0 < j \le N-2$, then for all $0<\tau <\tilde{\tau}$ the set $\mathcal{S}_j(\tau)$ is the union of two smooth parametrized $j$-surfaces.
Actually we are going to prove that  
$\{x\in \mathcal{S}_j(\tau) : x=  \int_{0}^{\tau} {e^{-As}b\gamma(s)ds}\} $ is a smooth parametrized $j$-surface,
the other case being entirely analogous.\\
Indeed, we have
\[
x=  \int_{0}^{s_1} {e^{-As}bds} + \sum_{i=1}^{j-1}(-1)^i  \int_{s_i}^{s_{i+1}} {e^{-As}bds}+ (-1)^j  \int_{s_j}^{\tau} {e^{-As}bds},
\]
where $0 < s_1 < \ldots < s_j <\tau$.
Observe that $ \frac{\partial x}{\partial s_i} = 2(-1)^{i+1}e^{-As_i}b, 1\le i \le j$, and by (\ref{indep}) the matrix
$\left(\frac{\partial x}{\partial s_i} \right)_{i=1,\cdots,j}$ has rank $j$ in the open set $\{ (s_1,\ldots,s_j)\in (0,\tau)^j : s_1 < \ldots < s_j \}$.
The proof of Step 2 is concluded.
 
\smallskip
 
\noindent Set now $\mathcal{S}_0(\tau) = \left\{ \pm \int_{0} ^{\tau} {e^{-As}bds}\right\}$.
By the Maximum Principle, owing to \eqref{fact} we have that
\[
\mathcal{S}(\tau) =\bigcup_{j=0}^{N-2}  \mathcal{S}_j(\tau)
\]
for all $0<\tau<\tilde{\tau}$ and the union is disjoint.
In particular, Step 2 implies that for all such $\tau$
\begin{equation}\label{fullmeas}
\mathcal{H}^{N-2}\big( \mathcal{S}(\tau) \setminus \mathcal{S}_{N-2}(\tau)\big) = 0
\end{equation}
and that (\ref{propag}) holds at every point $x \in \mathcal{S}_{N-2}(\tau)$.
The proof of Claim 1 is concluded.

\smallskip

\noindent
\textit{Claim 2}. The statement of Theorem \ref{propagN-2} holds in the general case.\\
\textit{Proof of Claim 2}. Let $0<\tau<\tilde{\tau}$ be given and fix $x\in \mathcal{S}\cap\mathrm{bdry}\, \mathcal{R}_\tau$,
together with the optimal control $u=(u_1,\ldots ,u_M)$ steering
the origin to $x$ in time $\tau$ by the reversed dynamics. Assume that $u_i$ has exactly $\kappa_i+1$ zeros, $0\le \kappa_i \le N-2$, at times
\[
0= s_0^i < s_1^i < \ldots < s_{\kappa_i}^i \le\tau.
\]
Then, recalling \eqref{rkB}, we have
\[
k\le  \mathrm{rank}\big\{ b_i,e^{As_1^i}b_i,\ldots ,e^{As_{\kappa_i}^i}b_i : 1\le i\le M \big\}\le N-1.
\]
For $j=0,\ldots , N-(1+k)$, let $\mathcal{S}_{j+k}(\tau)$ be the set of all $x\in\mathcal{S}(\tau)$ such that
\[
 \mathrm{rank} \big\{ e^{As_1^i}b_i,\ldots ,e^{As_{\kappa_i}^i}b_i : 1\le i\le M \big\}=j+1.
\]
Observe that
\[
\mathcal{S}(\tau)= \bigcup_{j=0}^{N-(1+k)} \mathcal{S}_{j+k}(\tau)
\]
and the union is disjoint. Moreover, by arguing exactly as in Steps 1 and 2 in the proof of Claim 1 above, we can see that each $\mathcal{S}_{j+k}(\tau)$
is a union of finitely many disjoint smooth parametrized $j$-surfaces. Thus,
\[
\mathcal{H}^{N-k-1}\big( \mathcal{S}_{j+k}(\tau) \big) =0\qquad \forall j=0,\ldots , N-(2+k)
\]
and
\[
\mathcal{H}^{N-k-1}\big( \mathcal{S}_{N-1}(\tau) \big) >0.
\]
Therefore, for $\mathcal{H}^{N-(k+1)}$-a.e. $x\in\mathcal{S}(\tau)$ there exists a neighborhood $V=V(x)$ such that
\[
\mathcal{H}^{N-(k+1)}\big( V\cap\mathcal{S}(\tau)\big)>0. 
\]
The proof is now complete.
\end{proof}
By combining the above result with the invariance statement contained in Theorem \ref{charnonlip} we obtain immediately the following
\begin{Theorem}\label{propagN-1} Let $\bar{\tau}$ be given as in Theorem \ref{propagN-2}.
Then for $\mathcal{H}^{N-k}$-a.e. $x\in \mathcal{S}$ such that $T(x) < \bar{\tau}$ there exists a neighborhood $V$ of $x$ such that
\[
\mathcal{H}^{N-k} (V\cap \mathcal{S}) > 0.
\]
\end{Theorem}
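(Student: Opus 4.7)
My plan is to combine the slice-wise lower bound of Theorem \ref{propagN-2} with the invariance of $\mathcal{S}$ along optimal reversed trajectories of vanishing Hamiltonian from Theorem \ref{charnonlip}: Theorem \ref{propagN-2} produces an $\mathcal{H}^{N-k-1}$-positive piece inside each time-slice $\mathcal{S}(\tau) = \mathcal{S}\cap\mathrm{bdry}\,\mathcal{R}_\tau$, while the invariance adds a transversal time direction. Fix $\bar{x}\in\mathcal{S}$ with $\tau_0 := T(\bar{x}) < \bar\tau$. Inspection of the proof of Theorem \ref{propagN-2} shows that, for $\mathcal{H}^{N-k-1}$-a.e.\ such $\bar{x}$, one may assume $\bar{x}$ lies in the maximal rank stratum $\mathcal{S}_{N-1}(\tau_0)$, which is locally a smooth $(N-k-1)$-surface parametrized by $N-k-1$ switching times $(s_1,\ldots,s_{N-k-1})$ of the unique optimal control steering the origin to $\bar{x}$ (distributed among the $M$ components).

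I then propose to consider the extended parametrization
\[
\Phi(s_1,\dots, s_{N-k-1};\tau) \;=\; \sum_{i=1}^M\int_0^\tau e^{-At}b_i\,\gamma_i(t)\,dt,
\]
where $\gamma_i\in\{-1,+1\}$ is the piecewise constant sign determined by the switching times of the $i$-th component and $\tau$ varies over a small interval around $\tau_0$. Theorem \ref{charnonlip} ensures that $\Phi$ maps into $\mathcal{S}$: for fixed switching times, $\tau\mapsto\Phi(\cdot;\tau)$ traces the optimal reversed trajectory through the endpoint, which lies entirely in $\mathcal{S}$ by the invariance statement. The partial derivatives are $\partial_{s_j}\Phi = \pm 2\,e^{-As_j}b_{i(j)}$ and $\partial_\tau\Phi = e^{-A\tau}\sum_i b_i\gamma_i(\tau^-)$, and the linear independence of the resulting $N-k$ vectors follows from the rank condition (\ref{indep}) already exploited in the proof of Theorem \ref{propagN-2}, applied now to the pairwise distinct times $(s_1,\ldots,s_{N-k-1},\tau)$. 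Thus $\Phi$ is a smooth immersion, and it is injective because distinct values of $\tau$ land in disjoint boundaries $\mathrm{bdry}\,\mathcal{R}_\tau$ while each fixed-$\tau$ slice is already parametrized injectively by the switching times. Consequently the image of $\Phi$ is an embedded $(N-k)$-submanifold contained in $\mathcal{S}$ near $\bar{x}$, and any sufficiently small open $V\ni\bar{x}$ satisfies $\mathcal{H}^{N-k}(V\cap\mathcal{S})>0$.

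The main technical hurdle is the $\mathcal{H}^{N-k}$-a.e.\ qualifier: I must show that the set of $\bar{x}\in\mathcal{S}$ for which the argument fails, namely those whose time-slice representative lies outside the maximal stratum $\mathcal{S}_{N-1}(T(\bar{x}))$, is $\mathcal{H}^{N-k}$-negligible in $\mathcal{S}$. Since $T$ restricted to any optimal reversed trajectory coincides with the time parameter, and is therefore globally $1$-Lipschitz along trajectories, a coarea-type inequality applied with $T$ as the slicing function, combined with the fact that $\mathcal{H}^{N-k-1}\bigl(\mathcal{S}(\tau)\setminus\mathcal{S}_{N-1}(\tau)\bigr)=0$ for each $\tau\le\bar\tau$ (again from the proof of Theorem \ref{propagN-2}), yields the desired $\mathcal{H}^{N-k}$-negligibility of the exceptional set and completes the proof.
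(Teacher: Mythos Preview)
Your strategy coincides with the paper's: combine the slice-wise lower bound of Theorem~\ref{propagN-2} with the invariance of $\mathcal{S}$ along optimal reversed trajectories (Theorem~\ref{charnonlip}) to add one transversal time dimension. The paper's own proof is in fact terser than yours---it simply takes the full-$\mathcal{H}^{N-k-1}$-measure set $E\subset\mathcal{S}(\tau)$ furnished by Theorem~\ref{propagN-2} and asserts that the union of optimal reversed trajectories through $E$ is a subset of $\mathcal{S}$ of full $\mathcal{H}^{N-k}$-measure---whereas you carry this out explicitly via the parametrization $\Phi$ and a rank computation.

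One step deserves care. Your coarea-type argument for the $\mathcal{H}^{N-k}$-negligibility of the exceptional set invokes $T$ as slicing function, but $T$ is precisely \emph{not} Lipschitz on $\mathcal{S}$, so the standard Eilenberg/coarea inequality is unavailable; and in any case that inequality runs in the wrong direction (it bounds the integral of slice measures by the total measure, not conversely). A cleaner route is to reuse your own parametrization idea on the lower strata: by the proof of Theorem~\ref{propagN-2}, each $\mathcal{S}_{j+k}(\tau)$ with $j<N-k-1$ is a finite union of smooth $j$-surfaces in the switching-time parameters, and adjoining the variable $\tau$ produces Lipschitz images of open subsets of $\mathbb{R}^{j+1}$, which are $\mathcal{H}^{N-k}$-null since $j+1\le N-k-1$. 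This is in effect what the paper's three-line argument leaves implicit.
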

\begin{proof}
Fix $0<\tau < \bar{\tau}$ and let $E$ be a subset of $\mathcal{S}\cap \mathrm{bdry}\mathcal{R}_{\tau}$ with full $\mathcal{H}^{N-(k+1)}$-measure
with the property (\ref{propag}). Then the optimal trajectories for the reversed dynamics through each point of $E$ from a
subset of $\mathcal{S}$ with full $\mathcal{H}^{N-k}$-measure. The proof is complete.
\end{proof}

\noindent\textbf{Remark.} The statement of theorems \ref{propagN-2} and \ref{propagN-1} are somewhat unnatural for linear systems, as they
are valid only for small times. The proof for arbitrarily large times requires an analysis of higher order and of linearly dependent zeros
of the switching function, which we are not yet able to conclude.

\smallskip

\noindent Observe finally that in the nonlinear two dimensional case the propagation result is contained in (\ref{propagnonlin}).

\end{document}